\newtheorem*{rep@theorem}{\rep@title}
\newcommand{\newreptheorem}[2]{%
\newenvironment{rep#1}[1]{%
 \def\rep@title{#2 \ref{##1}}%
 \begin{rep@theorem}}%
 {\end{rep@theorem}}}
\def\min{\operatorname{min}}
\newcommand{\IC}[0]{\mathbb{C}}
 \newcommand{\IN}[0]{\mathbb{N}}
 \newcommand{\IZ}[0]{\mathbb{Z}}
\newcommand{\CC}[0]{\mathcal{C}}
 \newcommand{\CL}[0]{\mathcal{L}}
 \newcommand{\CP}[0]{\mathcal{P}}
\newcommand{\CU}[0]{\mathcal{U}} 
\newcommand{\CW}[0]{\mathcal{W}}
\newcommand{\OF}[0]{\vec{F}}
\newcommand {\e}{{\epsilon}}
\newcommand {\eval}{{\rm eval}_{F_p}}
\newtheorem{theorem}{Theorem}[section]
\newtheorem*{theorem*}{Theorem}
\newtheorem*{proposition*}{Proposition}
\newtheorem{proposition}[theorem]{Proposition}
\newtheorem{lemma}[theorem]{Lemma}
\newtheorem*{lemma*}{Lemma}
\newtheorem{example}[theorem]{Example}
\newtheorem{definition}[theorem]{Definition}
\newtheorem{corollary}[theorem]{Corollary}
\newtheorem{remark}[theorem]{Remark}
\newtheorem{convention}{Convention}[section] 
\numberwithin{equation}{section}
\begin{document}
\title[A central limit theorem for $F_p$]{An extension of Krishnan's central limit theorem to the Brown-Thompson groups}
\author{Valeriano Aiello} 
\address{Valeriano Aiello,
Dipartimento di Matematica, Universit\`a di Roma Sapienza, P.le Aldo Moro
5, 00185 Roma, Italy, \url{https://github.com/valerianoaiello}}
\email{valerianoaiello@gmail.com}

\begin{abstract}
We extend a central limit theorem, recently established for the Thompson group $F=F_2$ by Krishnan, to the Brown-Thompson groups $F_p$, where $p$ is any integer greater than or equal to $2$. The non-commutative probability
space considered is the group algebra $\mathbb{C}[F_p]$, equipped with the canonical trace. The random variables in question are $a_n:= (x_n + x_n^{-1})/\sqrt{2}$, where $\{x_i\}_{i\geq 0}$ represents the standard family of infinite generators. Analogously to the case of $F=F_2$, it is established that the limit distribution of $s_n = (a_0 + \ldots + a_{n-1})/\sqrt{n}$ converges to the standard normal distribution.

Furthermore, it is demonstrated that for a state corresponding to Jones's oriented subgroup $\vec{F}$, such a central limit theorem does not hold.
\end{abstract}

\maketitle

\section*{Introduction}
Thompson’s group $F$ was introduced by R. Thompson in the 1960s, along with its two sibling groups $T$ and $V$. While $F$ is a group of piecewise linear homeomorphisms of $[0,1]$ with derivatives that are powers of $2$ and a finite number of points of non-differentiability   at dyadic rationals, $T$ acts on $S^1$ and $V$ on the Cantor set. $F$ is one of the most intriguing countable discrete groups, yet it remains quite enigmatic, as its analytical properties have posed challenges to experts for decades. The question of its amenability is particularly notorious. For an excellent introduction to the fundamental properties of the Thompson groups, we refer the reader to \cite{CFP}. The Brown-Thompson group $F_p$, for $p \geq 2$, was introduced later, in the 1980s, by Brown \cite{Brown}. Its elements can still be seen as piecewise linear homeomorphisms of $[0,1]$, but the derivatives are powers of $p$, and the points of non-differentiability are of the form $a/p^k$, with $a, k \in \mathbb{Z}$. In particular, for $p=2$, it is evident that $F_2=F$. The Brown-Thompson groups admit an infinite presentation, namely
$$
\langle x_0, x_1, \ldots \; | \; x_nx_k=x_kx_{n+p-1} \quad \forall \; k<n\rangle\, .
$$

Thompson groups have been of interest to many areas, for example homotopy theory, cryptography, knot theory \cite{A2, AB1, AB2, Krushkal, Luo}, and more recently
non-commutative probability, \cite{Kostler, KKW, Kri}, to name but a  few. Our interest here is motivated by the latter topic.
We recall that
an algebraic non-commutative probability space is a pair 
$(A,\varphi)$, where $A$ is   a unital $*$-algebra over $\IC$, and$\varphi: A\to \IC$ is a unital positive linear functional (also known as state). 
The elements of $A$ are called non-commutative random variables.
  Given a sequence of non-commutative probability spaces $(A_n, \varphi_n)$, $n\in\IN$, $(A, \varphi)$, a sequence of variables $a_n\in A_n$, $a\in A$, we say that $a_n$ converges in distribution to $a$ (we write $a_n\stackrel{distr}{\to} a$) if 
 $$
\lim_{n\to \infty} \varphi_n(a_n)=\varphi(a)
$$
The Central Limit Theorem is a fundamental result in probability theory and it is no surprise that non-commutative versions do exist (see e.g. a "free" version of it in \cite{Voi} and the recent survey \cite{Skeide}).
Quite recently, Krishnan \cite{Kri} proved a Central Limit Theorem for the Thompson group $F$. 
Our main goal in this article is an extension of such theorem to all the Brown-Thompson groups.
More precisely, we investigate the non-commutative probability space consisting of the group algebra $\IC[F_p]$ equipped with the functional given by its canonical trace $\gamma: \IC[F_p]\to \IC$, defined as $\gamma(g):=\delta_{g,e}$, $g\in F_p$, $e$ is the neutral element.
 The main result of this article is the following, which for $p=2$ recovers exactly Krishnan's theorem. 
\begin{theorem}[Central Limit Theorem]\label{maintheo1}
Let $x_1$, $x_2$, \ldots be the generators of $F_p$ in its infinite presentation and set
$$
a_n:=\frac{x_n+x_n^{-1}}{\sqrt{2}} \qquad \text{ and } \qquad s_n:=\frac{ \sum_{i=0}^{n-1} a_i }{\sqrt{n}}
$$
be  self-adjoint random variables in the probability space $(\IC[F_p], \gamma)$, with $\gamma: \IC[F_p]\to \IC$ being the trace.
Then,  $s_n$ converges in distribution to $x$, where $x$ is a random variable normally distributed of variance $1$.
\end{theorem}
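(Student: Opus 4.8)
The plan is to establish the theorem by the method of moments: since in the non-commutative setting the distribution of the self-adjoint variable $s_n$ is recorded by its moments $\gamma(s_n^m)$, and since the standard Gaussian is determined by its moments, it suffices to show that for every $m\in\IN$ one has $\gamma(s_n^m)\to 0$ for $m$ odd and $\gamma(s_n^m)\to (m-1)!!=(m-1)(m-3)\cdots 3\cdot 1$ for $m$ even. Expanding the definitions and using $\gamma(g)=\delta_{g,e}$ gives
$$\gamma(s_n^m)=\frac{1}{n^{m/2}\,2^{m/2}}\sum_{i_1,\dots,i_m=0}^{n-1}\ \sum_{\epsilon_1,\dots,\epsilon_m\in\{\pm 1\}}\gamma\bigl(x_{i_1}^{\epsilon_1}x_{i_2}^{\epsilon_2}\cdots x_{i_m}^{\epsilon_m}\bigr),$$
in which the inner quantity is the indicator that the word $x_{i_1}^{\epsilon_1}\cdots x_{i_m}^{\epsilon_m}$ represents the identity of $F_p$. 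Thus the entire problem is reduced to counting, for each assignment of indices and signs, the words that equal the identity, and to tracking how this count scales with $n$.

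First I would dispose of the odd moments. The assignment $x_i\mapsto 1$ extends to a homomorphism $F_p\to\IZ$, since the defining relations preserve total exponent; hence a necessary condition for $x_{i_1}^{\epsilon_1}\cdots x_{i_m}^{\epsilon_m}=e$ is $\sum_{j=1}^m\epsilon_j=0$, which is impossible for $m$ odd with $\epsilon_j\in\{\pm1\}$. Consequently no word contributes and $\gamma(s_n^m)=0$ identically for every odd $m$.

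The core of the argument is the even case $m=2k$. The strategy is to show that the trivial words organize, at leading order in $n$, according to the pair partitions of $\{1,\dots,2k\}$, of which there are exactly $(2k-1)!!$, and that each such partition accounts for $2^k n^k\,(1+o(1))$ trivial (index, sign) tuples; after normalization by $2^{-k}n^{-k}$ each partition contributes $1$ in the limit and the moments converge to $(2k-1)!!$. To isolate these leading contributions I would prove a structural lemma asserting that a word can be trivial only if its letters cancel in matched positive/negative pairs, so that the coincidence pattern induced on the $2k$ positions is a perfect matching, while configurations in which some index value occurs three or more times, or in which a shifted index accidentally collides with another, form a set of tuples of size $O(n^{k-1})$ and are negligible. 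For a matched pair of \emph{adjacent or nested} type the cancellation is the naive one, with equal index and opposite signs. The essential new feature is the \emph{crossing} pairs: here the two letters are not adjacent, and triviality is realized not by equality of indices but through the defining relation $x_nx_k=x_kx_{n+p-1}$, which trades a letter for a copy of itself with index shifted by $p-1$. The basic verification is the elementary identity $x_ax_bx_{a+p-1}^{-1}x_b^{-1}=e$ for $a>b$, together with its sign and inverse variants, which realizes the crossing matchings while preserving, once all admissible variants are collected, the same leading count $2^k n^k$ for each pair partition.

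The main obstacle I anticipate is precisely this structural lemma and the bookkeeping of the crossing contributions: one must prove rigorously, preferably through the analogue for $F_p$ of the usual seminormal form (each element written as $x_{i_1}\cdots x_{i_r}x_{j_s}^{-1}\cdots x_{j_1}^{-1}$ with the appropriate reducedness condition), that triviality forces a matching, that higher multiplicities are of strictly lower order, and — the one genuinely $p$-dependent step — that replacing Krishnan's shift $n\mapsto n+1$ (the case $p=2$) by $n\mapsto n+p-1$ changes only which specific tuples are trivial and not their leading-order number. Once the count $(2k-1)!!\,2^k n^k\,(1+o(1))$ is secured for $m=2k$, convergence of every moment to the corresponding Gaussian moment follows, and the method of moments yields $s_n\stackrel{distr}{\to} x$ with $x$ standard normal of variance $1$.
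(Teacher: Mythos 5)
Your plan coincides with the paper's proof in every structural respect: reduction to moments, vanishing of odd moments via the exponent-sum homomorphism, organization of the neutral words of length $2k$ by pair partitions, the count of $2^k n^k(1+o(1))$ tuples per partition, and the identification of crossing pairs --- resolved through $x_nx_k=x_kx_{n+p-1}$ --- as the only genuinely non-commutative feature. So this is not a different route; it is the same route with the road not yet built.

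The gap is the one you flag yourself: the ``structural lemma'' is the entire mathematical content of the result, and asserting that non-matching configurations are $O(n^{k-1})$ is not enough to extract the constant. What must actually be proved is: (i) that the matching is well defined --- the paper does this by rewriting each word into the normal form of Proposition \ref{prop2.14} via a terminating, confluent reduction system, recording the position permutation $\tau(i,\epsilon)$, and showing (Corollary \ref{corollary3.5}) that the normal form of a neutral word is palindromic, so that $\pi(i,\epsilon):=\tau(i,\epsilon)(\pi_{rain})$ is a genuine pair partition; (ii) that the negative indices of a neutral word are \emph{uniquely determined} by the positive ones and by $\tau$ (Propositions \ref{prop3.9} and \ref{prop3.11}) --- this is subtler than ``the partner is shifted by $p-1$,'' because a letter can drift by any multiple $q(p-1)$ with $|q|\le d/2$ depending on how many letters it crosses and in which order (Lemma \ref{lemma3.4}), and this uniqueness is precisely what turns the count into exactly one free index per pair; and (iii) the two-sided binomial bounds of Lemmas \ref{prop4.3} and \ref{prop4.4}, which give $N(d,n,\tau)/n^{d/2}\to 1/(d/2)!$ for each of the $d!!=2^{d/2}(d/2)!$ permutations lying over a fixed pair partition, whence the $2^{d/2}n^{d/2}$ per partition; this sandwich is also what absorbs the ``accidental collision'' configurations you propose to discard. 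None of these steps is routine, and (ii) in particular has no analogue in the classical CLT computation; as written, your proposal is a faithful outline of the paper's argument rather than a proof of the theorem.
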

We briefly outline the idea of the proof. As we will see, by the definition of the trace in the group algebra, computing the limits of the moments of order $d$ of $s_n$ amounts to counting the number of words of length $d$, in the first $n$ generators and their inverses, that yield the neutral element $e$. To estimate the number of these words, we associate
a permutation with
 each word. The permutation is  obtained as follows: we re-write each word in the so-called normal form (which will have the same length as the original word), and then keep track of how each letter of the word changes position from the initial word, to the normal form. Finally, we estimate the number of these permutations for any $n$ and $d$.

Going back to the presentation of the results of this paper, we actually also investigate the non-commutative space where the algebra is $\IC[F]$, but with a different state. We make a brief digression in order to introduce it.
In 2014, Vaughan Jones initiated a research program  focusing on Thompson's groups, whose main component was a machinery to produce their actions. While for a more detailed account on this program and the results that followed we refer to the surveys \cite{Jo18, Bro0, A2}, in passing, we mention that this led to the discovery of new classes of groups  \cite{Bro1, Bro2}. In one implementation of this technique, Jones produced several unitary representations of Thompson groups, enabling the derivation of both previously existing and new representations, see \cite{Jo14, Jo19, BJ, ABC, AJ}. Among the new representations, one resulted in an intriguing subgroup, the so-called oriented subgroup $\vec{F}$ of $F$. This subgroup has proven to be significant in both group theory and knot theory. Remarkably, it can produce all oriented knots \cite{A}, akin to braid groups, and has also led to the discovery of a new maximal subgroup of infinite index, distinct from the previously known subgroups such as the stabilizers of points under the natural action of $F$ (see \cite{Sav, Sav2, SavBak}). 
Now, to any element $g$ of $F$, one can associate a special graph called its planar graph $\Gamma(g)$, and an element belongs to $\vec{F}$ precisely when the (normalized) chromatic polynomial evaluated at $2$, that is, Chr$_{\Gamma(g)}(2)/2$, is  equal to $1$. Given Krishnan's Central Limit Theorem, it is natural to ask whether a similar theorem holds for this new state defined by Chr$_{\Gamma(g)}(2)/2$. 
 It turns out that, while the “odd moments” of $s_n$ are all equal to zero and the second moment converges to $2$, the even moments of order greater than $2$ diverge.

We conclude the introduction with a few remarks on the structure of the paper. 
Section \ref{sec:1} is devoted to introducing the Thompson group $F$, the Brown-Thompson groups $F_p$, and Jones's oriented subgroup $\vec{F}$. These are the main objects of study in this article. 
Section \ref{sec:CLT} revisits the basic results on abstract reduction systems, which are employed in the same section to provide a proof of the Central Limit Theorem for $F_p$, namely Theorem \ref{maintheo1}. 
In the final part of the article, Section \ref{sec:oriented} demonstrates that the Central Limit Theorem does not apply to the canonical state associated with the oriented subgroup $\vec{F}$.

\section{Preliminaries and notation}\label{sec:1}
In this section, we review the definitions of Thompson's group $F$, the Brown-Thompson groups $F_k$, and the directed subgroup $\vec{F}$. Interested readers can find further details in \cite{CFP, B} regarding $F$, in \cite{Brown} for $F_3$, and in \cite{GS, Ren} for $\vec{F}$.

Thompson's group $F$ is the collection of all piecewise linear homeomorphisms within the unit interval $[0,1]$ that are diffentiable everywhere except at a finite set of dyadic rational numbers (i.e. numbers of the form $a/2^k$, $a, k\in\IZ$). Moreover, on the differentiable intervals, the derivatives are powers of $2$, with derivatives (where well defined) being powers of $2$. We adhere to the conventional notation: $f\cdot g(t)=g(f(t))$.

Thompson's group is characterized by the following infinite presentation:
$$
F=\langle y_0, y_1, y_2, \ldots \; | \; y_ny_k=y_ky_{n+1} \quad \forall \; k<n\rangle\, .
$$
Notably, $y_0$ and $y_1$ suffice to generate $F$.
The elements $y_0$, $y_1$, $y_2$, \ldots generate a monoid denoted by $F_+$, whose elements are said to be positive.
In passing, we mention that Thompson monoid $F_+$ and Thompson-like moinoids have been of interest in non-commutative probability, see e.g. \cite{KKW, CR, CDR}.

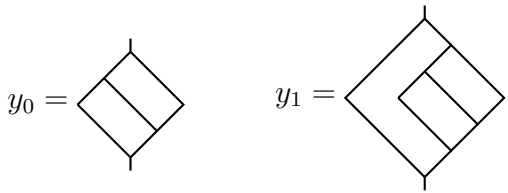
\begin{figure}
\phantom{This text will be invisible} 
\[
\begin{tikzpicture}[x=.35cm, y=.35cm,
    every edge/.style={
        draw,
      postaction={decorate,
                    decoration={markings}
                   }
        }
]

\node at (-1.5,0) {$\scalebox{1}{$y_0=$}$};
\node at (-1.25,-3) {\;};

\draw[thick] (0,0) -- (2,2)--(4,0)--(2,-2)--(0,0);
 \draw[thick] (1,1) -- (2,0)--(3,-1);

 \draw[thick] (2,2)--(2,2.5);

 \draw[thick] (2,-2)--(2,-2.5);

\end{tikzpicture}\qquad
\;\;
\begin{tikzpicture}[x=.35cm, y=.35cm,
    every edge/.style={
        draw,
      postaction={decorate,
                    decoration={markings}
                   }
        }
]

\node at (-3.5,0) {$\scalebox{1}{$y_1=$}$};
\node at (-1.25,-3.25) {\;};

\draw[thick] (2,2)--(1,3)--(-2,0)--(1,-3)--(2,-2);

\draw[thick] (0,0) -- (2,2)--(4,0)--(2,-2)--(0,0);
 \draw[thick] (1,1) -- (2,0)--(3,-1);

 \draw[thick] (1,3)--(1,3.5);
 \draw[thick] (1,-3)--(1,-3.5);

\end{tikzpicture}
\]
\caption{The generators of $F=F_2$.}\label{genThompsonF}
\end{figure}
The projection of $F$ onto its abelianization is denoted by $\pi: F\to F/[F,F]=\mathbb{Z}\oplus \mathbb{Z}$, and it takes on a meaningful interpretation when viewing $F$ as a group of homeomorphisms: $\pi(f)=(\log_2 f'(0),\log_2 f'(1))$. Notably, $\pi(y_0)=(1,-1)$, and $\pi(y_k)=(0, -1)$ for all $k\geq 1$.
An alternative description pertinent to this paper emerges: the elements of $F$ can be visualized as pairs $(T_+,T_-)$ of planar binary rooted trees, each having the same number of leaves. When represented visually, one tree is positioned upside down on top of the other; $T_+$ constitutes the top tree, while $T_-$ constitutes the bottom tree.

Such a representation is termed a tree diagram, and any pair of trees $(T_+,T_-)$ depicted in this manner is considered a tree diagram. Two pairs of trees are deemed equivalent if they differ solely by pairs of opposing carets. In other words, two tree diagrams are equivalent if they can be transformed into one another by exchanging pairs of carets positioned opposite to each other.
\[\begin{tikzpicture}[x=.5cm, y=.5cm,
    every edge/.style={
        draw,
      postaction={decorate,
                    decoration={markings}
                   }
        }
]

 \draw[thick] (0,0)--(1,1)--(2,0)--(1,-1)--(0,0); 
 \draw[thick] (1,1.5)--(1,1); 
 \draw[thick] (1,-1.5)--(1,-1); 
\node at (0,-1.2) {$\;$};
\node at (3.5,0) {$\scalebox{1}{$\leftrightarrow$}$};

\end{tikzpicture}
\begin{tikzpicture}[x=.5cm, y=.5cm,
    every edge/.style={
        draw,
      postaction={decorate,
                    decoration={markings}
                   }
        }
]

  \draw[thick] (1,1.5)--(1,-1.5); 
\node at (0,-1.2) {$\;$};
 
\end{tikzpicture}
\]
Each equivalence class of pairs of trees, representing an element of $F$, corresponds uniquely to a reduced tree diagram. Here, "reduced" means that the number of vertices in the diagram is minimized, as outlined in \cite{B}. Figure \ref{genThompsonF} provides a depiction of the reduced tree diagrams for the first two generators of $F$.

Moving on, there exists a family of groups that extends the scope of the Thompson group, these are the Brown-Thompson groups. For any integer $p\geq 2$, the Brown-Thompson group $F_p$ can be defined by the following presentation:
$$
\langle x_0, x_1, \ldots \; | \; x_nx_k=x_kx_{n+p-1} \quad \forall \; k<n\rangle\, .
$$
The elements $x_0, x_1, \ldots , x_{p-1}$ generate $F_p$.
The neutral element is denoted by $e$.

 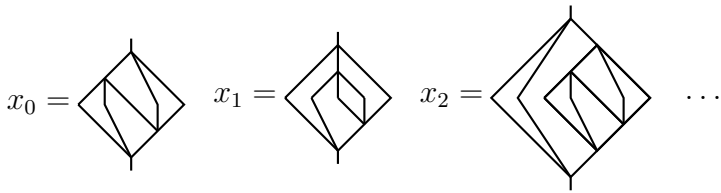
\begin{figure}
\phantom{This text will be invisible} \[
\begin{tikzpicture}[x=.35cm, y=.35cm,
    every edge/.style={
        draw,
      postaction={decorate,
                    decoration={markings}
                   }
        }
]

\node at (-1.5,0) {$\scalebox{1}{$x_0=$}$};
\node at (-1.25,-3) {\;};

\draw[thick] (0,0) -- (2,2)--(4,0)--(2,-2)--(0,0);
\draw[thick] (1,1) -- (1,0)--(2,-2);
\draw[thick] (1,1) -- (2,0)--(3,-1);
\draw[thick] (2,2) -- (3,0)--(3,-1);

 \draw[thick] (2,2)--(2,2.5);
 \draw[thick] (2,-2)--(2,-2.5);

\end{tikzpicture}
\;\;
\begin{tikzpicture}[x=.35cm, y=.35cm,
    every edge/.style={
        draw,
      postaction={decorate,
                    decoration={markings}
                   }
        }
]

\node at (-1.5,0) {$\scalebox{1}{$x_1=$}$};
\node at (-1.25,-3.25) {\;};


\draw[thick] (0,0) -- (2,2)--(4,0)--(2,-2)--(0,0);
 \draw[thick] (1,0)--(2,-2);
\draw[thick] (3,0)--(2,1) -- (1,0); 
\draw[thick] (2,0)--(3,-1);
\draw[thick] (2,2) -- (2,0);
\draw[thick] (3,0)--(3,-1);


 \draw[thick] (2,2)--(2,2.5);
 \draw[thick] (2,-2)--(2,-2.5);
\end{tikzpicture}
\;\;
\begin{tikzpicture}[x=.35cm, y=.35cm,
    every edge/.style={
        draw,
      postaction={decorate,
                    decoration={markings}
                   }
        }
]

\node at (-3.5,0) {$\scalebox{1}{$x_2=$}$};
\node at (-1.25,-3.25) {\;};

\draw[thick] (2,2)--(1,3)--(-2,0)--(1,-3)--(2,-2);

\draw[thick] (0,0) -- (2,2)--(4,0)--(2,-2)--(0,0);
 \draw[thick] (1,1) -- (2,0)--(3,-1);

\draw[thick] (0,0) -- (2,2)--(4,0)--(2,-2)--(0,0);
\draw[thick] (1,1) -- (1,0)--(2,-2);
\draw[thick] (1,1) -- (2,0)--(3,-1);
\draw[thick] (2,2) -- (3,0)--(3,-1);

\draw[thick] (1,3) -- (-1,0)--(1,-3);

\node at (6,0) {$\ldots$};

 \draw[thick] (1,3)--(1,3.5);
 \draw[thick] (1,-3)--(1,-3.5);

\end{tikzpicture}
\]
\caption{The generators of the Brown-Thompson group $F_3$.}\label{genThompsonF3}
\end{figure}

Likewise to $F$, the elements of $F_p$ find their description through pairs of $p$-ary trees. An illustration of the generators of $F_3$ is presented in Figure \ref{genThompsonF3}.

\begin{convention}\label{conventiondrawings}A convention is established for the visualization of trees on the plane. In our representation of planar   trees, the roots are depicted as vertices of degree $3$. Consequently, each tree diagram features the topmost and bottommost vertices of degree $1$, positioned respectively on the lines $y=1$ and $y=-1$. The leaves of the trees are situated on the $x$-axis, precisely at the non-negative integers. As for the elements of $F_3$, 
the leaves are positioned on the $x$-axis, precisely at the non-negative integers $\mathbb{N}_0 :=\{0, 1, 2, \ldots \}$. 
\end{convention}

A natural embedding $\iota: F_2\to F_3$ is established by substituting the $3$-valent vertices in a tree diagram of $F_2$ with $4$-valent vertices and connecting the middle edges in the only possible planar manner (see Figure \ref{basic-tree}). In this embedding, we observe that $\iota(y_i)=x_{2i}$. For simplicity, we will frequently omit the symbol for this mapping in our notation.

Let $(T_+,T_-)$ represent an element of $F_3$. We now provide a brief recapitulation, derived from \cite{Jo14, Jo18}, of how to construct $\Gamma(T_+,T_-)$, referred to as the \emph{planar graph of $(T_+,T_-)$}. 

Each tree diagram $(T_+,T_-)$ divides the strip bounded by two lines parallel to the $x$-axis, 
one passing through the highest univalent vertex, 
one through the lowest univalent vertex. We color the strip alternately in black and white. The leftmost region is conventionally black. The vertices of the planar graph are positioned on the $x$-axis, specifically at $-1/2+2\mathbb{N}_0 :=\{-1/2,1+1/2, 3+1/2, \ldots \}$, with one vertex corresponding to each black region. An edge is drawn between two black regions whenever they meet at a $4$-valent vertex . 

For an element $(T_+,T_-)$ in $F_2$, we define the \emph{planar graph of $(T_+,T_-)$} to be that of $\iota(T_+,T_-)\in F_3$.

Note that the  
planar graph of $(T_+,T_-)$ is
essentially the Tait graph of $\CL(T_+,T_-)$ 
except that we do not specify the signs of its edges. 

The oriented subgroups were introduced by Jones, the first in \cite{Jo14}, the second in \cite{Jo18}. 
We are  
now in a position to define them
\begin{align*}
\vec{F}&:=\{(T_+,T_-)\in F \; | \; \Gamma(T_+,T_-) \textrm{ is $2$-colourable} \} \\  
&:=\{(T_+,T_-)\in F \; | \; {\rm Chr}_{\Gamma(T_+,T_-)}(2)=2\} \\  
\vec{F}_3&:=\{(T_+,T_-)\in F_3\; | \; \Gamma(T_+,T_-) \textrm{ is $2$-colourable} \}\\
&:=\{(T_+,T_-)\in F_3 \; | \; {\rm Chr}_{\Gamma(T_+,T_-)}(2)=2\} 
\end{align*}
where ${\rm Chr}_{\Gamma}(t)$ is the chromatic polynomial, and by being $2$-colourable we mean that it is possible to label the vertices of the graph with two colours such that whenever two vertices are connected by an edge, they have different colours.  
We designate the two colors as $+$ and $-$. Importantly, the definition of the planar graph $\Gamma(T_+,T_-)$ is independent of the specific representative $(T_+,T_-)$, as indicated in \cite[Section 4.1]{Jo14}.
Since $\Gamma(T_+,T_-)$ is connected, if it is $2$-colorable, there exist precisely two colorings. By convention, we opt for the coloring where the left-most vertex has the color $+$.  Notably, for $\Gamma(T_+,T_-)$, being $2$-colorable is equivalent to being bipartite.

Here we summarize some results on $\vec{F}$.
\begin{theorem} \label{theoGS}
\cite{GS}
The oriented subgroup  $\vec{F}$
\begin{enumerate}
\item  is   isomorphic to the Brown-Thompson group $F_3$ via the isomorphism $\alpha:  F_3 \to F$ defined by mapping $x_0$, $x_1$, $x_2\in F_3$ to 
$w_0:=y_0y_1$,
$w_1:=y_1y_2$,
$w_2:=y_2y_3\in F$. More generally, we have $\alpha(x_i)=y_iy_{i+1}$ for all $i\geq 0$. In particular, $\vec{F}$ is contained in $K_{(1,2)}$. 
\item is the stabilizer of the subset of dyadic rationals 
$$
S:=\{ .a_1\ldots a_n \, | \, \omega(a_1\ldots a_n )\equiv_2 0\}
$$ 
where $\omega: W_2\to \IZ_2$ is defined as 
$\omega(a_1\ldots a_n ):=\sum_{i=1}^n a_i$, $W_2$ is the set of binary words (i.e. with letters in $\{0, 1\}$), $\equiv_2$ denotes equivalence modulo $2$.
\end{enumerate}
\end{theorem}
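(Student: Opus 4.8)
The plan is to establish the two assertions separately: part (1) by a relation check together with the structure theory of Brown--Thompson groups, and part (2) by converting the topological hypothesis that $\Gamma(T_+,T_-)$ be $2$-colourable into an arithmetic condition on binary expansions. I would begin with part (1) by showing that $\alpha$ is a well-defined homomorphism, i.e.\ that the elements $w_i:=y_iy_{i+1}$ satisfy the defining relations of $F_3$, namely $w_nw_k=w_kw_{n+2}$ for $k<n$. This is a direct computation inside $F$: starting from $y_ny_{n+1}y_ky_{k+1}$ one applies $y_my_j=y_jy_{m+1}$ (valid for $j<m$) to migrate the block $y_ky_{k+1}$ to the left, and after four such moves the word becomes exactly $y_ky_{k+1}y_{n+2}y_{n+3}=w_kw_{n+2}$.

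For injectivity I would invoke the well-known fact that every proper quotient of a Brown--Thompson group is abelian, equivalently that every nontrivial normal subgroup of $F_3$ contains its commutator subgroup. Thus a nontrivial $\ker\alpha$ would force $\alpha(F_3)$ to be abelian; but reducing to positive normal form gives $w_0w_1=y_0y_1y_1y_2\neq y_0y_1y_3y_4=w_1w_0$, so $\alpha(F_3)$ is nonabelian and $\ker\alpha=\{e\}$. The containment $\vec{F}\subseteq K_{(1,2)}$ then follows at the level of the abelianisation $\pi$: from $\pi(y_0)=(1,-1)$ and $\pi(y_k)=(0,-1)$ for $k\geq1$ one computes $\pi(w_0)=(1,-2)$ and $\pi(w_i)=(0,-2)$ for $i\geq1$, so every element of $\alpha(F_3)$ has image in $\mathbb{Z}\oplus2\mathbb{Z}$.

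The heart of the argument is the bridge between the two descriptions of $\vec{F}$, which at once proves part (2) and identifies $\alpha(F_3)$. I would colour each leaf of a tree by the parity $\omega$ of its binary address and note that sibling leaves always receive opposite colours, so this parity colouring is automatically a proper $2$-colouring of the regions of a single tree. Viewing $g=(T_+,T_-)$ as the PL map sending the $i$-th leaf interval of $T_+$ affinely onto the $i$-th leaf interval of $T_-$, a dyadic $x$ and its image $g(x)$ have binary addresses differing only in the prefixes coming from the two leaf addresses; hence $g$ preserves the parity of every dyadic, equivalently $g\in\stab(S)$, if and only if the $i$-th leaves of $T_+$ and $T_-$ carry matching parities for every $i$. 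The final link is to recognise this matching condition as the bipartiteness of $\Gamma(T_+,T_-)$: the planar graph glues the two trees along their common leaf set, its cycles record precisely the potential parity conflicts between top and bottom, and it is $2$-colourable exactly when no such conflict occurs. This yields $\vec{F}=\stab(S)$, which is part (2).

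It remains to close the loop for part (1) by proving $\alpha(F_3)=\langle w_i\rangle=\vec{F}$. The inclusion $\langle w_i\rangle\subseteq\vec{F}$ is easy: a one-line computation of the leaf addresses of $y_iy_{i+1}$ shows each $w_i$ is parity-preserving, hence lies in $\stab(S)=\vec{F}$. For the reverse inclusion I would induct on the number of leaves of a reduced diagram $(T_+,T_-)\in\vec{F}$, using the alternation of parities imposed by the $2$-colouring to locate a caret configuration that can be absorbed into some $w_i^{\pm1}$, thereby reducing the leaf count while remaining inside $\vec{F}$, with the identity as base case. I expect this last decomposition to be the main obstacle: one must show uniformly that the parity colouring always exhibits a removable \emph{double} caret reflecting the ternary nature of $F_3$ under $\alpha$, and it is precisely here that the combinatorics of the $2$-colouring must be controlled. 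By comparison, the relation check, the injectivity argument, and the abelianisation computation are routine.
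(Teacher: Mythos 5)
First, a point of order: the paper does not prove this statement at all --- it is quoted from Golan--Sapir \cite{GS} --- so there is no internal proof to compare yours against. Judged on its own terms, your outline handles the routine parts correctly and in the way the literature does: the four-step verification that $w_nw_k=w_kw_{n+2}$ for $k<n$, injectivity of $\alpha$ from the fact that every proper quotient of $F_3$ is abelian combined with $w_0w_1=y_0y_1^2y_2\neq y_0y_1y_3y_4=w_1w_0$, the abelianisation computation giving $\vec{F}\subseteq K_{(1,2)}$ (once one knows $\vec{F}=\langle w_i\rangle$), and the observation that $g\in\stab(S)$ iff corresponding leaves of $T_+$ and $T_-$ have binary addresses of equal digit-sum parity.

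However, the two steps that carry essentially all of the content of the theorem are asserted rather than proved. (a) The equivalence between $2$-colourability of $\Gamma(T_+,T_-)$ and the leaf-parity matching condition: the sentence ``its cycles record precisely the potential parity conflicts between top and bottom, and it is $2$-colourable exactly when no such conflict occurs'' is part (2) restated, not an argument. The vertices of $\Gamma$ are the black regions of the strip (defined after passing through $\iota:F_2\to F_3$), which are not in any obvious bijection with the leaves, so one must actually construct the correspondence between region colourings and leaf-address parities for a single tree, and then analyse what a $2$-colouring of the glued graph forces about the two trees jointly; none of this is supplied. (b) The inclusion $\vec{F}\subseteq\langle w_i : i\geq 0\rangle$: you propose an induction on the number of leaves of a reduced diagram and yourself flag it as the main obstacle, but no argument is given that a nontrivial reduced diagram in $\vec{F}$ always exhibits a removable double-caret configuration absorbable into some $w_i^{\pm 1}$. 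This is precisely where Golan and Sapir need their detailed combinatorial analysis. Until (a) and (b) are filled in, part (2) and the surjectivity half of part (1) remain unproved.
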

\begin{remark}
In \cite{Ren},  Ren   gave a graphical interpretation
   of the isomorphism 
  $\alpha: F_3\to \vec{F}$  of Theorem \ref{theoGS}-(1): on the level of tree diagrams, it  is
  obtained by
   replacing every $4$-valent vertex of ternary trees by the \emph{basic} binary tree with $4$ leaves shown in Figure \ref{basic-tree}), see also for other isomorphisms of this type
\cite{BCS}.
\end{remark}
\begin{remark}
The introduction of $\vec{F}$ (or more precisely its image under a monomorphism) 
provided a novel type of maximal subgroup of infinite index  in $F$. 
This contributed to the development of a research area devoted to the study of maximal subgroup of infinite index in Thompson groups, \cite{GS2, G2, TV,  TV2, BBQS, TV3}.
\end{remark}

\begin{figure}
\phantom{This text will be invisible} 
 \[
\begin{tikzpicture}[x=1cm, y=1cm,
    every edge/.style={
        draw,
      postaction={decorate,
                    decoration={markings}
                   }
        }
]

 \draw[thick] (1,1)--(1,1.5);
\draw[thick] (0,0) -- (1,1)--(2,0);
 \draw[thick] (1,0)--(1,1);

\end{tikzpicture}
\quad
\begin{tikzpicture}[x=1cm, y=1cm,
    every edge/.style={
        draw,
      postaction={decorate,
                    decoration={markings}
                   }
        }
]

\node at (-.75,0.25) {$\scalebox{1}{$\mapsto$}$};

 \draw[thick] (1,1)--(1,1.5);
\draw[thick] (0,0) -- (1,1)--(2,0);
 \draw[thick] (1,0)--(.5,.5);

\end{tikzpicture}
\]
\caption{Ren's map: a ternary tree is replaced by the \emph{basic tree}.}\label{basic-tree}
\end{figure}
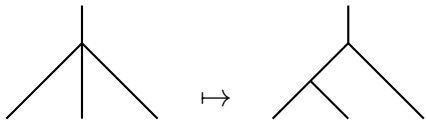
 
Other subgroups that will appear in this article are the rectangular subgroups of $F$, as introduced in \cite{BW}.
\begin{align*}
K_{(a,b)}&:=\{f\in F\; | \; \log_2f'(0)\in a\IZ, \log_2f'(1)\in b\mathbb{Z}\}=\pi^{-1}(a\IZ \oplus b\IZ) \quad a, b\in\IN.
\end{align*}
These subgroups can be characterised as the only finite index subgroups of $F$ isomorphic with $F$ \cite[Theorem 1.1]{BW}.

The following lemma can be proved either by means of Theorem \ref{theoGS}-(1), or by drawing the corresponding planar graphs.
\begin{lemma}\label{operationsonE}
Let  
$\varphi_R, \varphi_L : F\to  F$ be the right and left shift endomorphisms, which are   defined graphically  as
\[
\begin{tikzpicture}[x=1cm, y=1cm,
    every edge/.style={
        draw,
      postaction={decorate,
                    decoration={markings}
                   }
        }
]

\node at (-.45,0) {$\scalebox{1}{$\varphi_R$:}$};

\draw[thick] (0,0)--(.5,.5)--(1,0)--(.5,-.5)--(0,0);
\node at (1.5,0) {$\scalebox{1}{$\mapsto$}$};

\node at (.5,0) {$\scalebox{1}{$g$}$};
\node at (.5,.-.75) {$\scalebox{1}{}$};

 \draw[thick] (.5,.65)--(.5,.5);
 \draw[thick] (.5,-.65)--(.5,-.5);
 
\end{tikzpicture}
\begin{tikzpicture}[x=1cm, y=1cm,
    every edge/.style={
        draw,
      postaction={decorate,
                    decoration={markings}
                   }
        }
]
 
\node at (2.5,0) {$\scalebox{1}{$g$}$};

\draw[thick] (1.5,0)--(2.25,.75)--(3,0)--(2.25,-.75)--(1.5,0);
\draw[thick] (2,0)--(2.5,.5)--(3,0)--(2.5,-.5)--(2,0);
 
 \node at (1.5,.-.75) {$\scalebox{1}{}$};

 \draw[thick] (2.25,.75)--(2.25,.9);
 \draw[thick] (2.25,-.75)--(2.25,-.9);

\end{tikzpicture}
\qquad 
\begin{tikzpicture}[x=1cm, y=1cm,
    every edge/.style={
        draw,
      postaction={decorate,
                    decoration={markings}
                   }
        }
]

\node at (-.45,0) {$\scalebox{1}{$\varphi_L$:}$};

\draw[thick] (0,0)--(.5,.5)--(1,0)--(.5,-.5)--(0,0);
\node at (1.5,0) {$\scalebox{1}{$\mapsto$}$};

\node at (.5,0) {$\scalebox{1}{$g$}$};
\node at (.5,.-.75) {$\scalebox{1}{}$};

 \draw[thick] (.5,.65)--(.5,.5);
 \draw[thick] (.5,-.65)--(.5,-.5);
 
\end{tikzpicture}
\begin{tikzpicture}[x=1cm, y=1cm,
    every edge/.style={
        draw,
      postaction={decorate,
                    decoration={markings}
                   }
        }
]
 
\node at (2.5,0) {$\scalebox{1}{$g$}$};

\draw[thick] (3.5,0)--(2.75,.75)--(2.5,0.5);
\draw[thick] (2.5,-0.5)--(2.75,-.75)--(3.5,0);
\draw[thick] (2,0)--(2.5,.5)--(3,0)--(2.5,-.5)--(2,0);
 
 \node at (1.95,.-.75) {$\scalebox{1}{}$};


 \draw[thick] (2.75,.75)--(2.75,.9);
 \draw[thick] (2.75,-.75)--(2.75,-.9);

\end{tikzpicture}
\]
Note that $\varphi_R(y_i)=y_{i+1}$ for all $i\geq 0$.
When $F$ is seen as a group of homeomorphisms of the unit interval, for $g\in F$, these 
maps read as follow 
\begin{align*}
&\varphi_L(g)(t):=\left\{\begin{array}{ll}
t & \text{ if } t\in [0,1/2]\\
g(2t-1) & \text{ if } t\in [1/2 ,1]
\end{array}
\right.\\
&\varphi_R(g)(t):=\left\{\begin{array}{ll}
g(2t) & \text{ if } t\in [0,1/2 ]\\
t & \text{ if } t\in [1/2,1]\\
\end{array}
\right.
\end{align*}
Then 
\begin{enumerate}
\item $g\in\OF$ if and only if $\varphi_R(g)\in\OF$;
\item $g\in\OF$ if and only if $\varphi_L(g)\in\OF$.
 \end{enumerate}
\end{lemma}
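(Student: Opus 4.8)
The plan is to deduce the two equivalences from the stabiliser description of $\vec F$ in Theorem~\ref{theoGS}-(2): $g\in\vec F$ if and only if $g$ preserves setwise the set $S$ of dyadic rationals of $(0,1)$ whose finite binary expansion has even digit sum $\omega$. Both $\varphi_R$ and $\varphi_L$ act by inserting a rescaled copy of $g$ into one half of $[0,1]$ and as the identity on the other half, so the whole lemma reduces to tracking how these insertions act on binary expansions and how that interacts with the parity $\omega$.

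First I would record the action on expansions. Writing a dyadic rational of $[0,1/2)$ as $.0w$ and one of $[1/2,1)$ as $.1w$, with $w$ a finite binary word, the definitions give --- up to the rescaling needed to make $\varphi_R(g),\varphi_L(g)$ genuine homeomorphisms of $[0,1]$ ---
\[\varphi_R(g)(.0w)=.0\,g(w),\qquad \varphi_R(g)(.1w)=.1w,\]
\[\varphi_L(g)(.0w)=.0w,\qquad \varphi_L(g)(.1w)=.1\,g(w),\]
where $g(w)$ denotes the finite binary word representing $g(.w)$. These follow at once from the case distinctions defining $\varphi_R,\varphi_L$ together with the fact that prepending the digit $0$ (resp.\ $1$) to an expansion realises the affine map $t\mapsto t/2$ (resp.\ $t\mapsto (t+1)/2$).

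For part (1), $\varphi_R(g)$ is the identity on $[1/2,1)$, so it trivially preserves $S$ there; on the points $.0w$ one has $\omega(0w)=\omega(w)$ and $\omega(0\,g(w))=\omega(g(w))$, so the prepended $0$ is parity-neutral and ``$\varphi_R(g)$ preserves $S$'' becomes, word by word, ``$\omega(w)\equiv\omega(g(w))\pmod 2$ for every $w$'', which is precisely ``$g$ preserves $S$''. Hence $\varphi_R(g)\in\vec F\iff g\in\vec F$. Part (2) is the mirror image: the active half is $[1/2,1)$ and a digit $1$ is prepended, so that $\omega(1w)=1+\omega(w)$ and $\omega(1\,g(w))=1+\omega(g(w))$; the extra $1$ shifts the parity on both sides of the equivalence and cancels, again giving $\varphi_L(g)\in\vec F\iff g\in\vec F$.

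The only delicate point is bookkeeping rather than mathematics: one fixes the convention that points of $S$ are recorded by finite binary words, so that trailing zeros (which leave $\omega$ unchanged) are harmless and the non-terminating expansion is never used, and one checks that $g\in F$ sends such words to such words; after that both equivalences fall out simultaneously, with no surviving estimate. I would regard this expansion-bookkeeping as the main obstacle. As a cross-check in the spirit of Theorem~\ref{theoGS}-(1) one also has $\varphi_R(w_i)=\varphi_R(y_iy_{i+1})=y_{i+1}y_{i+2}=w_{i+1}$, which already gives $\varphi_R(\vec F)\subseteq\vec F$; and a purely pictorial route is available, since $\varphi_R$ and $\varphi_L$ graft a single outermost caret onto $T_+$ and $T_-$ and so should modify $\Gamma(T_+,T_-)$ by only a single extra vertex joined by at most one edge, a change that preserves $2$-colourability in both directions. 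The obstacle in that second route is instead to pin down, through Ren's embedding $\iota$ and the black-region rule, exactly which local modification of $\Gamma$ the extra caret induces.
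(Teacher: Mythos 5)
Your argument is correct, but it takes a genuinely different route from the paper, which in fact writes down no proof at all: it only remarks that the lemma ``can be proved either by means of Theorem \ref{theoGS}-(1), or by drawing the corresponding planar graphs'', i.e.\ via the generating set $w_i=y_iy_{i+1}$ of $\vec{F}$ or via $2$-colourability of $\Gamma$. You instead invoke Theorem \ref{theoGS}-(2), the description of $\vec{F}$ as the setwise stabiliser of the parity set $S$, and track how prepending a digit $0$ or $1$ to a binary expansion interacts with $\omega$ modulo $2$. This buys you both directions of each equivalence in one stroke: by contrast, the computation $\varphi_R(w_i)=w_{i+1}$ suggested by route (1) directly yields only the inclusion $\varphi_R(\vec{F})\subseteq\vec{F}$, and the converse (that $\varphi_R(g)\in\vec{F}$ forces $g\in\vec{F}$) needs a further argument --- which the stabiliser description supplies for free, since $g$ permutes the dyadic rationals and hence preserves $S$ if and only if it preserves its complement, exactly the observation you need in the $\varphi_L$ case where the prepended digit $1$ flips the parity on both sides. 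Two small points you rightly flag and should keep explicit in a final write-up: the paper's displayed formulas omit the affine rescaling (they should read $\varphi_R(g)(t)=g(2t)/2$ on $[0,1/2]$ and $\varphi_L(g)(t)=(1+g(2t-1))/2$ on $[1/2,1]$ for the maps to be homeomorphisms of $[0,1]$ matching the pictures), and $\omega$ is well defined on dyadic rationals only because trailing zeros do not change the digit sum, with $g$ indeed preserving dyadic rationals since its slopes are powers of $2$ and its breakpoints are dyadic.
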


\section{A central limit for $F_p$ w.r.t. the tracial state}\label{sec:CLT}

The goal of this section is to prove a central limit theorem for $F_p$. 
Here the probability space consists of the group algebra $\IC[F_p]$ and its tracial state $\gamma: \IC[F_p]\to \IC$ defined as $\gamma(g):=\delta_{g,e}$.

We begin by introducing the notations and recalling some basic results on abstract reduction systems. For the latter topic, we refer to \cite{BadNip}.
\begin{definition}
For $d\in\IN$, $S_d$  denotes the group of permutations of the set $[d] := \{1, \ldots , d\}$.
For any even $d$, a pair-partition $\pi$ of $[d]$ is collection of $d/2$ of disjoint subsets $V_1$, \ldots ,  $V_{d/2}$ such that
$\cup_{i=0}^{d/2}V_i=[d]$. 
There is a distinguished pair-partition called the rainbow  pair-partition $\pi_{rain} =\{ \{1, d\}, \{2, d-1\}, \{d/2, (d+2)/2\}\}$. 
When $d$ is odd, there are no pair-partitions.
$\CP_2(d)$  denotes the set of pair partitions of $[d]$. 
\end{definition}

\begin{definition}
The generators of $F_p$ (i.e., $x_0$, $x_1$, \ldots) are called letters. 
Given $d\in\IN$, a word of length $d$ in $F_p$ is a tuple $w=(x_{i(1)}^{\epsilon(1)}, \ldots, x_{i(d)}^{\epsilon(d)})$, where $i: [d]\to \IN_0$ and 
$\epsilon: [d]\to\{1,-1\}$.  A word is also denoted by $(i,\epsilon)$. The evaluation of a word $w=(x_{i(1)}^{\epsilon(1)}, \ldots, x_{i(d)}^{\epsilon(d)})$ is $\eval(w)=x_{i(1)}^{\epsilon(1)} \cdots x_{i(d)}^{\epsilon(d)}$. If the evaluation of a word is the neutral element of the group $e$, we say that the word is neutral. 
The set of words of length $d$ are denoted by $\CW(d)$.
Any two words $w_1=(x_{i(1)}^{\epsilon(1)}, \ldots, x_{i(d)}^{\epsilon(d)})$ and $w_2=(x_{i'(1)}^{\epsilon'(1)}, \ldots, x_{i'(d')}^{\epsilon'(d')})$ can be concatenated, that is $w_1w_2=(x_{i(1)}^{\epsilon(1)}, \ldots, x_{i(d)}^{\epsilon(d)}, x_{i'(1)}^{\epsilon'(1)}, \ldots, x_{i'(d')}^{\epsilon'(d')})$. A word $w$ is a subword of $w'$ if $w'=w_1ww_2$, with $w_1$, $w_2$ possibly being empty words.
The set of neutral word of length $d$ is 
$$
\CW_0(d):=\{w\in \CW(d) \, | \, \eval(w)=e\}.
$$
The set of neutral word of length $d$ in the first $n$ generators is 
$$
\CW_0(d, n):=\{w\in \CW(d) \, | \, \eval(w)=e, i: [d]\to \{0, \ldots, n-1\}\}.
$$
\end{definition}

\begin{definition}
An abstract reduction system is defined as a pair $(A, \to)$, where $A$ is a non-empty set and $\to$ is a binary relation, represented as a subset of $A \times A$. We denote $(x, y) \in \to$ by $x \to y$. 
\begin{enumerate}
\item Given a binary relation $R \subseteq A \times B$ and $S \subseteq B \times C$ be two binary relations. Their composition is defined by 
$S\circ R:=\{(x,z)\in A\times C\, | \, \exists y\in B \text{ with } (x,y)\in R, (y, z)\in S\}$.
\item For any $i \in \mathbb{N}$, the $i$-fold composition of $\to$ is denoted as $\stackrel{i}{\to}$, while the reflexive and transitive closure of $\to$ is represented by $\stackrel{*}{\to}$.
\item An element $x \in A$ is termed reducible if there exists some $y \in A$ such that $x \to y$; otherwise, it is referred to as irreducible or in normal form. A normal form of $x$ is an element $y$ such that $x \stackrel{*}{\to} y$ and $y$ is in normal form.
\item Two elements $x$ and $y$ in $A$ are deemed joinable if there exists some $z \in A$ such that $x \stackrel{*}{\to} z$ and $y \stackrel{*}{\to} z$, denoted as $x \downarrow y$.
\item A reduction is considered confluent if for all $w$, $x$, and $y$ in $A$ with $w \stackrel{*}{\to} x$ and $w \stackrel{*}{\to} y$, we have $x \downarrow y$. \item It is termed locally confluent if for all $w$, $x$, and $y$ in $A$ with $w \to x$ and $w \to y$, we have $x \downarrow y$.
\item A reduction is defined as terminating if there exists no infinite chain $x_0 \to x_1 \to \ldots$
\item A reduction is considered normalizing if every element has a normal form.
\item A reduction $\to$ is termed finitely branching if every element has finitely many direct successors.
\end{enumerate}
\end{definition}
The following lemma, especially point (4), will be our main tool in this section.
\begin{lemma} \label{abs-red-sys}
\begin{enumerate}
\item If a reduction is normalizing and confluent, then every element has a unique normal form.
\item A finitely branching reduction system
 terminates if and only if there is a monotone embedding into $(\IN,  >)$.
\item (Newman's Lemma) A terminating reduction is confluent if and only if it is locally confluent.
\item \cite[Prop. 2.9]{Kri} Let $(A, \rightsquigarrow)$ be an abstract reduction system with a locally confluent and terminating reduction and $A^{irr}:=\{w\in A\, | \, w $ is irreducible w.r.t. $ \rightsquigarrow\}$. Let $(A^{irr}, \rightarrowtail)$ another abstract reduction system with a locally confluent and terminating reduction. Then every element admits a normal form with respect to $\to :=  \stackrel{*}{\rightarrowtail}\circ \stackrel{*}{\rightsquigarrow}$. 
\end{enumerate}
\end{lemma}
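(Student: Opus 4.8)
The four assertions are classical facts about abstract reduction systems, so the plan is to establish (1)--(3) by the standard arguments and then assemble (4) from them. For (1), existence of a normal form for each element is precisely the normalizing hypothesis, so only uniqueness requires work: if $y_1$ and $y_2$ are both normal forms of $x$, then $x \stackrel{*}{\to} y_1$ and $x \stackrel{*}{\to} y_2$, so confluence yields a $z$ with $y_1 \stackrel{*}{\to} z$ and $y_2 \stackrel{*}{\to} z$; since $y_1$ and $y_2$ are irreducible, both reductions must be trivial, forcing $y_1 = z = y_2$. For (2), the direction in which the embedding exists is immediate: a map $f : A \to \mathbb{N}$ with $x \to y \Rightarrow f(x) > f(y)$ sends any chain $x_0 \to x_1 \to \cdots$ to a strictly decreasing sequence in $\mathbb{N}$, which cannot be infinite, so $\to$ terminates. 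For the converse I would set $f(x)$ to be the length of the longest reduction sequence issuing from $x$; the only point needing care is that this is finite, and this is exactly where finite branching is used: the tree of all reduction sequences starting at $x$ is finitely branching by hypothesis and has no infinite path by termination, so by König's lemma it is finite, whence $f(x) < \infty$. Then $x \to y$ gives $f(x) \geq f(y)+1$, so $f$ is the desired monotone embedding.

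Part (3), \emph{Newman's Lemma}, is the crux of the whole lemma. One implication is trivial, since local confluence is a special case of confluence; the substance is that, for a terminating relation, local confluence implies confluence. Here the naive attempt to tile a divergence directly with local-confluence diamonds fails, and the correct tool, made available by termination, is well-founded (Noetherian) induction. I would prove the statement \emph{``$w$ is confluent''} by induction on $w$ along $\to$: given $w \stackrel{*}{\to} x$ and $w \stackrel{*}{\to} y$, the cases where one side is trivial are immediate, so write $w \to x_1 \stackrel{*}{\to} x$ and $w \to y_1 \stackrel{*}{\to} y$; apply local confluence to $w \to x_1$ and $w \to y_1$ to obtain a common $z$ with $x_1 \stackrel{*}{\to} z$ and $y_1 \stackrel{*}{\to} z$; then invoke the induction hypothesis at $x_1$ to join $x$ and $z$ at some $u$, and at $y_1$ to join $y$ and $u$ at some $v$, concluding $x \stackrel{*}{\to} v$ and $y \stackrel{*}{\to} v$. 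The inductive appeals at $x_1$ and $y_1$ are legitimate precisely because the single steps $w \to x_1$ and $w \to y_1$ make these elements strictly smaller in the well-founded order.

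For (4), the plan is first to upgrade both given systems. By (3), $\rightsquigarrow$ is confluent (being terminating and locally confluent), and being terminating it is normalizing, so by (1) every $x \in A$ has a unique $\rightsquigarrow$-normal form $N(x) \in A^{irr}$; the same reasoning applied to $(A^{irr}, \rightarrowtail)$ shows every element of $A^{irr}$ has a unique $\rightarrowtail$-normal form. Given $x \in A$, I would take $y := N(x)$ and then let $z$ be the $\rightarrowtail$-normal form of $y$. Since $\rightarrowtail$ is a relation on $A^{irr}$, the element $z$ remains in $A^{irr}$, so $z$ is simultaneously $\rightsquigarrow$- and $\rightarrowtail$-irreducible. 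By construction $x \stackrel{*}{\rightsquigarrow} y \stackrel{*}{\rightarrowtail} z$, that is $(x,z) \in \to$. Finally $z$ is $\to$-irreducible: if $z \to z'$, then $z \stackrel{*}{\rightsquigarrow} w \stackrel{*}{\rightarrowtail} z'$ for some $w$, but $\rightsquigarrow$-irreducibility of $z$ forces $w = z$, and then $\rightarrowtail$-irreducibility forces $z' = z$. Hence $z$ is a normal form of $x$ with respect to $\to$, which is what is claimed.

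The only genuinely nontrivial step is the well-founded induction in Newman's Lemma; part (2) reduces to invoking König's lemma at the correct place (this is where the finite-branching hypothesis is indispensable), while parts (1) and (4) are then essentially bookkeeping, the latter amounting to the observation that running $\rightsquigarrow$ to completion and then $\rightarrowtail$ to completion lands in an element irreducible for both relations and hence for the composite.
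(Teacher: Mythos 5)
Your proof is correct. The paper offers no proof of this lemma at all---parts (1)--(3) are quoted from \cite{BadNip} and part (4) from \cite[Prop.~2.9]{Kri}---and the arguments you supply (the joinability argument for uniqueness, K\"onig's lemma for the finitely-branching characterization, Huet's well-founded-induction proof of Newman's Lemma, and the ``normalize with $\rightsquigarrow$ then with $\rightarrowtail$'' construction for the composite) are precisely the standard ones those references contain, so there is nothing to compare against and nothing to correct.
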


The normal form that we will most use in this article is associated with the composition
of the following binary relations
\begin{enumerate}
\item $(\CW(d), \rightsquigarrow)$ where $w_1ww_2 \rightsquigarrow w_1f(w)w_2$ for $w_1$, $w_2\in\CW(d)$, $w\in\CW(2)$ and
\begin{align*}
f(w)&=f(x_{i(k)}^{-1}, x_{i(k+1)}):=\left\{\begin{array}{ll}
(x_{i(k+1)}, x_{i(k)+p-1}^{-1}) & i(k)>i(k+1)\\
(x_{i(k+1)+p-1}, x_{i(k)}^{-1}) & i(k)<i(k+1)\\
(x_{i(k+1)}, x_{i(k)}^{-1}) & i(k)=i(k+1)\\
\end{array}
\right.
\end{align*}
\item $(\CW(d)^{irr}, \rightarrowtail)$ where $w_1ww_2 \rightarrowtail w_1h(w)w_2$ for $w_1$, $w_2\in\CW(d)$, $w\in\CW(2)$ and
\begin{align*}
h(w)&=h(x_{i(k)}^{\epsilon(k)}, x_{i(k+1)}^{\epsilon(k+1)})\\
&:=\left\{\begin{array}{ll}
(x_{i(k+1)-p+1}, x_{i(k)}) & \text{ if } i(k+1)-p+1>i(k), \epsilon(k)=\epsilon(k+1)=1\\
(x_{i(k+1)}^{-1}, x_{i(k)-p+1}^{-1}) & \text{ if } i(k)-p+1>i(k+1), \epsilon(k)=\epsilon(k+1)=-1\\
\end{array}
\right.
\end{align*}
\end{enumerate}
The abstract reduction system $(\CW(d),\to)$, where $\to$ is defined as $\stackrel{*}{\rightarrowtail}\circ \stackrel{*}{\rightsquigarrow}$, is terminating and confluent (the proof consists of a very small modification of the arguments of  \cite[Prop. 2.14]{Kri} and, therefore, is omitted). By Lemma \ref{abs-red-sys}-(4), we have the following result.
\begin{proposition}\label{prop2.14}
For any $w=(i,\epsilon)\in\CW(d)$, there exists a unique word in $w'=(j,\epsilon')= (x_{j(1)}, \ldots,  x_{j(r)},  x_{j(r+1)}^{-1}, \ldots,  x_{j(d)}^{-1}) \in  \CW(d)$ such that
\begin{enumerate}
\item $r\leq d$ 
\item $\eval (w)= \eval (w')$
\item $j(1)+p-1\geq j(2)$,  $j(2)+p-1\geq j(3)$, \ldots,  $j(r-1)+p-1\geq j(r)$,
\item $j(r+1)\leq j(r+2)+p-1$,  $j(r+2)\leq j(r+3)+p-1$, \ldots,  $j(d-1)\leq j(d) +p-1$,  
\end{enumerate}
\end{proposition}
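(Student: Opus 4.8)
The plan is to identify the words described by conditions (1)--(4) as exactly the normal forms of the reduction system $(\CW(d),\to)$, where $\to := \stackrel{*}{\rightarrowtail}\circ\stackrel{*}{\rightsquigarrow}$, and then to feed this into the machinery of Lemma~\ref{abs-red-sys}. Existence of a normal form is essentially handed to us: Lemma~\ref{abs-red-sys}-(4), applied to the pair $(\CW(d),\rightsquigarrow)$ together with $(\CW(d)^{irr},\rightarrowtail)$, guarantees that every $w$ reduces to some $w'$ that is simultaneously irreducible for $\rightsquigarrow$ and for $\rightarrowtail$. So the first task I would carry out is to describe such a $w'$ concretely and match it against (1)--(4).

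A word is irreducible for $\rightsquigarrow$ precisely when it contains no subword of the form $(x_{i(k)}^{-1},x_{i(k+1)})$, that is, no inverse letter is immediately followed by a positive one. Hence every $\rightsquigarrow$-irreducible word has the shape $(x_{j(1)},\ldots,x_{j(r)},x_{j(r+1)}^{-1},\ldots,x_{j(d)}^{-1})$ for some $0\le r\le d$, which already yields (1) and the stated form. Among such words, irreducibility for $\rightarrowtail$ forbids, on the positive block, any adjacent pair with $i(k+1)-p+1>i(k)$ --- equivalently it forces $j(k)+p-1\ge j(k+1)$ for $1\le k<r$, which is (3); symmetrically, on the inverse block it forbids $i(k)-p+1>i(k+1)$, forcing $j(k)\le j(k+1)+p-1$ for $r<k<d$, which is (4). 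The one point I would be careful about here is that the two-stage reduction is genuinely legitimate: since $h$ sends a positive pair to a positive pair and an inverse pair to an inverse pair, applying $\rightarrowtail$ never manufactures a new inverse--positive adjacency and therefore preserves $\rightsquigarrow$-irreducibility. This is exactly what makes $\rightarrowtail$ act on $\CW(d)^{irr}$ and what lets a $\to$-normal form be read off as ``a $\rightsquigarrow$-normal form whose positive and inverse blocks are each $\rightarrowtail$-reduced'', matching (1), (3), (4).

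Next I would dispatch (2), for which it suffices that a single $\rightsquigarrow$- or $\rightarrowtail$-step leaves $\eval$ unchanged. Each case of $f$ and of $h$ is a direct rewriting by the defining relation $x_mx_\ell=x_\ell x_{m+p-1}$ (for $\ell<m$): for instance the case $i(k)>i(k+1)$ of $f$ is the identity $x_{i(k)}^{-1}x_{i(k+1)}=x_{i(k+1)}x_{i(k)+p-1}^{-1}$, which is that relation with $m=i(k)$, $\ell=i(k+1)$ rearranged; the case $i(k)<i(k+1)$ is the same relation with the roles of $m$ and $\ell$ exchanged; and the two cases of $h$ are this relation read on two positive, respectively two inverse, letters. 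As $\to$ is a composition of such steps, $\eval(w)=\eval(w')$.

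Finally, uniqueness. The system $(\CW(d),\to)$ is terminating and confluent (the quoted small modification of \cite[Prop.~2.14]{Kri}); termination makes it normalizing, and by Lemma~\ref{abs-red-sys}-(1) a normalizing confluent system assigns to each element a unique normal form. Hence $w'$ is the unique word enjoying (1)--(4), completing the argument. I expect the genuinely hard ingredient of the whole proof to be precisely this termination-and-confluence input, which one would establish by exhibiting a monotone map into $(\IN,>)$ (Lemma~\ref{abs-red-sys}-(2)) and verifying local confluence on the finitely many overlaps of the rewriting rules (Newman's Lemma, Lemma~\ref{abs-red-sys}-(3)); since the statement permits us to import it, the remaining content is the bookkeeping above, whose only delicate step is the preservation of $\rightsquigarrow$-irreducibility by $\rightarrowtail$.
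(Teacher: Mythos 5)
Your proposal is correct and follows exactly the route the paper takes: the paper derives this proposition directly from Lemma~\ref{abs-red-sys}-(4) together with the (imported) termination and confluence of $(\CW(d),\to)$, leaving the identification of the $\to$-irreducible words with conditions (1)--(4) and the invariance of $\eval$ under each rewriting step implicit. You have simply written out those omitted verifications, including the one genuinely delicate point (that $\rightarrowtail$ preserves $\rightsquigarrow$-irreducibility), so there is nothing to correct.
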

For $w$ as in Proposition \ref{prop2.14}, we refer to $(x_{j(1)}, \ldots,  x_{j(r)})$ and $(x_{j(r+1)}^{-1}, \ldots,  x_{j(d)}^{-1})$ as its positive and negative parts, respectively.

From now on, if we refer to a normal form without specifying the reduction system, we mean that of Proposition \ref{prop2.14}.
In the case of $F_2=F$, this normal form 
 is called anti-normal form by Belk in \cite{B}.

We now associate a permutation with any word. We will make use of these permutations to compute the $\gamma(s_n^d)$.
\begin{definition}
Given a word $w=(i, \epsilon)\in\CW(d)$, denote by $(j, \epsilon')$ the unique normal form of $w$ (defined in Proposition \ref{prop2.14}).
For each $l\in [d]$, the letter $x_{i(l)}^{\epsilon(l)}$ is transformed to some $x_{j(u_l)}^{\epsilon(l)}$. We define a permutation $\tau\in S_d$ as
\begin{align}
\tau(i,\epsilon)(l)&:=u_l \text{ for any } l\in [d]
\end{align}
To any word $(i, \e)\in \CW_0(d)$ we assign a pair partition $\pi(i, \e):=\tau(i,\epsilon)(\pi_{rain})$ in $\CP_2(d)$.
\end{definition}

Denote by $(1_k)$ the array of length $k$ whose entries are all equal to $1$.
Here follows a basic result on the permutations associated with words of length $d$.
\begin{lemma}\label{lemma3.4}
Given $w=(i, \epsilon)\in\CW(d)$ and its normal form $(j, \epsilon')$. Then the permutation $\tau(i,\epsilon)$ satisfies the following properties
\begin{enumerate}
\item $-d(p-1)\leq j(l)-i(\tau(i,\epsilon)^{-1}(l))\leq d(p-1)$ for $l\in [d]=\{1, \ldots, d\}$;
\item if $\sum_{l=1}^d\epsilon(l)=0$, then $d$ is even and $\epsilon'=(1_{d/2}, -1_{d/2})$ and  
$$-d(p-1)/2\leq j(l)-i(\tau(i,\epsilon)(l))\leq d(p-1)/2 \qquad\forall l\in [d]=\{1, \ldots, d\}.$$
\end{enumerate}
\end{lemma}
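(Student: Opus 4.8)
The plan is to follow each of the $d$ letters of $w$ individually as a sequence of elementary reductions carries $w$ to the normal form $(j,\epsilon')$ of Proposition \ref{prop2.14}, and to read off the permutation $\tau=\tau(i,\epsilon)$ as the net change of position. Two structural facts about these elementary steps make the bookkeeping possible, and I would establish them first, directly from the displayed formulas for $f$ and $h$. (i) Every step acts on a length-two subword by transposing its two letters, preserving both of their signs, shifting the index of exactly one of them by $p-1$, and leaving the other index unchanged. Hence each step is an adjacent transposition, and $\tau$ is the ordered product of the transpositions occurring along the reduction. (ii) In the $\rightsquigarrow$-phase only a (negative, positive) adjacent pair is rewritten, and afterwards the positive letter lies to the left; so in this phase distinct-sign letters cross, each index shift is $+(p-1)$ or $0$, and all indices are non-decreasing. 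In the $\rightarrowtail$-phase only equal-sign pairs are rewritten, each index shift is $-(p-1)$ or $0$, and all indices are non-increasing; this phase is a bubble-sort of the positive block and of the negative block into the order prescribed by Proposition \ref{prop2.14}.

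From (i)--(ii) the net index displacement of a letter $L$ decomposes as $c^{+}_L(p-1)-c^{-}_L(p-1)$, where $c^{+}_L$ is the number of crossings $L$ undergoes in the $\rightsquigarrow$-phase (its only source of increase) and $c^{-}_L$ the number in the $\rightarrowtail$-phase (its only source of decrease). The key combinatorial input, which I would prove next, is that every pair of letters crosses \emph{at most once} during the whole process: a mixed pair can cross only in the $\rightsquigarrow$-phase and, once the positive letter has moved to the left of the negative one, the pair is never rewritten again; a like-sign pair can cross only in the $\rightarrowtail$-phase, where an adjacent transposition removes exactly the one inversion it resolves. This uses the termination and confluence recorded after Proposition \ref{prop2.14}.

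For part (1) I would then bound both $c^{+}_L$ and $c^{-}_L$ by $d-1$ (a letter meets each of the other $d-1$ letters at most once), so the displacement of $L$ lies in $[-(d-1)(p-1),(d-1)(p-1)]\subseteq[-d(p-1),d(p-1)]$. Written for the letter terminating in position $l$, this displacement is $j(l)-i(\tau^{-1}(l))$, which is precisely the claimed inequality. For part (2), the hypothesis $\sum_{l}\epsilon(l)=0$ means $w$ has equally many positive and negative letters; since no reduction creates or cancels letters, this count is preserved, so $d$ is even, the positive part of the normal form has length $r=d/2$, and $\epsilon'=(1_{d/2},-1_{d/2})$. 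The factor-of-two improvement then comes from counting crossings per phase with the sharper cardinalities: a positive letter can cross in the $\rightsquigarrow$-phase only the $\le d/2$ negatives initially to its left and in the $\rightarrowtail$-phase only the $\le d/2-1$ remaining positives, so its displacement lies in $[-(d/2-1)(p-1),(d/2)(p-1)]\subseteq[-d(p-1)/2,d(p-1)/2]$; negative letters are symmetric, exchanging the roles of ``left'' and ``right''.

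I expect the main obstacle to be the ``at most once per pair'' claim for the sorting phase, since this is exactly what caps the per-phase counts at $d/2$ and $d/2-1$; establishing it cleanly means arguing that each $\rightarrowtail$-step strictly decreases the number of unresolved inversions of the target order and that distinct inversions are resolved by distinct steps. A secondary, routine task is to match the per-letter displacement to the exact index combinations written in the two parts, keeping careful track of whether a letter is indexed by its initial or its final position.
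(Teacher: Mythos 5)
Your argument is essentially the paper's own proof: the paper likewise tracks each letter through the reduction, observes that $\rightsquigarrow$ only raises indices (by $p-1$ per mixed-sign crossing) while $\rightarrowtail$ only lowers them (by $p-1$ per like-sign crossing), and then bounds the number of crossings per letter by $d$ for part (1) and by $d/2$ per phase for part (2). The ``each pair crosses at most once'' point you single out as the main obstacle is precisely the step the paper also relies on --- and asserts without further justification --- so your more explicit treatment of it is a refinement of the same route rather than a different one.
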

\begin{proof}
(1) In a word of length $d$ each letter can change  position at most $d$ times and each time the index changes by $p-1$.

(2) First we consider the upper bound. In this case, there are $d/2$ letters with exponent $1$, $d/2$ with exponent $-1$. 
If we look at the binary relations $\rightsquigarrow$ and $\rightarrowtail$ we notice that with latter the indices either stay the same or decrease, so we only need to consider the first.
Each letter of the form $x_l^{-1}$ can move at most $d/2$ times to the right of a letter of the form $x_h$.  
With the binary relation $\rightsquigarrow$, the index either increase by $p-1$, or stays the same (symmetrically, each  letter of the form $x_h$
moving to the left of a letter of type  $x_l^{-1}$, either increase by $p-1$, or stays the same). 
This means that at the end the index increases at most by $d(p-1)/2$.

As for the lower bound,  the binary relation $\rightarrowtail$ is the only one not to increase the indices. 
Each letter of the form $x_l$ can move at most $d/2$ times to the right of a letter of the form $x_h$ and, thus, at the end the
index decreases at most by $-d(p-1)/2$.
The same reasoning applies for pairs of letters of the form $x_l^{-1}$ and $x_h^{-1}$.
\end{proof}

Here follows a simple, yet useful result on the pair partition $\pi(i, \e)$.
\begin{lemma}\label{lemma3.8}
Let $(i,\e)\in\CW_0(d)$ with $i_0=\min\{i(l)\, |\, l\in [d]\}$, then for each pair $\{l, m\}\in \pi(i,\e)$ it holds $i(l)=i_0$ if and only if $i(m)=i_0$.
\end{lemma}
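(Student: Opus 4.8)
The plan is to reduce the statement to two facts about the normal form of Proposition \ref{prop2.14}: a \emph{palindrome} symmetry of its indices, and the \emph{invariance} of the smallest index under the reductions.

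First I would record that a neutral word has balanced exponents. Applying the homomorphism $F_p\to\IZ$ sending every $x_k$ to $1$ (it respects the relations $x_nx_k=x_kx_{n+p-1}$) to $\eval(i,\e)=e$ gives $\sum_{l=1}^d\e(l)=0$. Hence Lemma \ref{lemma3.4}-(2) applies: $d$ is even, $\e'=(1_{d/2},-1_{d/2})$, and the normal form reads $x_{j(1)}\cdots x_{j(d/2)}x_{j(d/2+1)}^{-1}\cdots x_{j(d)}^{-1}$, still evaluating to $e$.

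Next I would prove that this normal form is palindromic, i.e. $j(s)=j(d+1-s)$ for every $s$. Indeed, $\eval(w')=e$ rewrites as the equality of positive words $x_{j(1)}\cdots x_{j(d/2)}=x_{j(d)}x_{j(d-1)}\cdots x_{j(d/2+1)}$. The left-hand side satisfies the inequalities (3) of Proposition \ref{prop2.14}, and a direct check shows that reversing the negative-part inequalities (4) turns them into exactly the inequalities (3) for the right-hand side; thus both are positive words in normal form representing the same element, and the uniqueness in Proposition \ref{prop2.14} forces them to coincide letter by letter. In particular two rainbow-paired positions of the normal form carry equal indices, so one is minimal if and only if the other is.

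Then I would show that the minimal index is carried by the very same letters throughout the reduction. Inspecting the four branches in the definitions of $f$ and $h$, in each elementary step the letter whose index actually changes has index strictly larger than the current minimum both before and after the step: under $\rightsquigarrow$ the changing index increases by $p-1$, while under $\rightarrowtail$ it decreases by $p-1$ but the defining inequalities $i(k+1)-p+1>i(k)$ and $i(k)-p+1>i(k+1)$ keep it strictly above the minimum; the other letter keeps its index. Consequently no letter ever acquires or loses the minimal value, so $i_0=\min_s j(s)=:j_0$ and, tracking positions by $\tau(i,\e)$, an original letter has index $i_0$ if and only if its image in the normal form has index $j_0$.

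Finally I would combine the two facts: a pair $\{l,m\}\in\pi(i,\e)=\tau(i,\e)(\pi_{rain})$ arises, through $\tau(i,\e)$, from a rainbow pair $\{s,d+1-s\}$ of the normal form, so the palindrome identity $j(s)=j(d+1-s)$ makes ``$j(s)=j_0$'' equivalent to ``$j(d+1-s)=j_0$'', and the letter-wise invariance of the minimum transports this back to the original word, yielding $i(l)=i_0\iff i(m)=i_0$. I expect the palindrome step to be the main obstacle, since one must recognise that the reversed negative part obeys the same normal-form inequalities as the positive part in order to invoke uniqueness of the normal form; the accompanying bookkeeping, that the minimal index is never created nor destroyed by an elementary reduction, is the second point requiring care.
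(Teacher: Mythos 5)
Your overall strategy is sound and its essential mechanism is the same as the paper's: the paper's own (two-sentence) proof of Lemma \ref{lemma3.8} is precisely the observation in your third paragraph, namely that in every elementary step of $\rightsquigarrow$ and $\rightarrowtail$ the letter whose index changes stays strictly above the other letter's index, so occurrences of the minimal index are neither created nor destroyed; the paper then tacitly combines this with the fact that rainbow-paired positions in the normal form of a neutral word carry equal indices, a fact it only establishes later (Corollary \ref{corollary3.5}(1)) by induction via the auxiliary system of Remark \ref{abs_red_tech}. Your reduction of the balanced-exponent condition to the homomorphism $x_k\mapsto 1$, and your case check of the four rewriting branches, are both correct.

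The one step that does not go through as written is the palindrome argument. You invoke ``the uniqueness in Proposition \ref{prop2.14}'' to conclude that the two positive words $x_{j(1)}\cdots x_{j(d/2)}$ and $x_{j(d)}\cdots x_{j(d/2+1)}$, which both satisfy the normal-form inequalities and have the same evaluation, must coincide letter by letter. But the uniqueness in Proposition \ref{prop2.14} is uniqueness of the $\to$-normal form \emph{of a given word}; it does not assert that two distinct irreducible words with equal evaluation are equal, and that assertion is false in general: $(x_0,x_0^{-1})$ and $(x_1,x_1^{-1})$ are both irreducible, both of the shape (1)--(4), and both evaluate to $e$. For \emph{positive} words the conclusion you want is true, but it requires extra input: that the positive monoid $F_p^+$ embeds in $F_p$ (so equality in the group yields equality in the monoid) and that the rewriting rules restricted to positive words generate the monoid congruence, whence confluence and the Church--Rosser property give a unique normal form per monoid element. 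Either supply this (it is standard for $F_p$ but nowhere stated in the paper), or replace the palindrome step by the paper's own inductive argument for Corollary \ref{corollary3.5}(1) using the system $\rightharpoonup$ of Remark \ref{abs_red_tech}. With that repair your proof is complete.
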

\begin{proof}
In the reduction system $(\CW(d), \to)$, every time two letters are exchanged, the smallest index remains unchanged. Therefore, in order to have
a letter to cancel out an occurrence of $x_{i_0}^{\pm 1}$, it must be of the form $x_{i_0}^{\mp 1}$.
\end{proof}

\begin{remark} \label{abs_red_tech}
Here we introduce an instrumental abstract reduction system $(\CW(d), \rightharpoonup)$ that we will make use of in a couple of technical lemmas. 
Given a minimal a word $w$, take the minimal index occurring in it, that is $i_0=\min\{i(l)\, |\, l\in [d]\}$. We now define an abstract reduction system $(\CW(d), \rightharpoonup)$ that "pushes" the letters with index $i_0$ to the sides of $w$, more precisely all occurrences of $x_0$ are moved to left, those of $x_0^{-1}$ to the right.
Given a word $u=w_1ww_2$, with $w$ of length $2$,  we set 
$$
u=w_1ww_2\rightharpoonup w_1f(w)w_2
$$
where
\[
\begin{array}{lll}
f(x_{i(m)}^{\e(m)}, x_{i_0})=(x_{i_0}, x_{i(m)+p-1}^{\e(m)}) & \text{ for } i(m)>i_0\\
f(x_{i_0}^{-1}, x_{i(m)}^{\e(m)})=(x_{i(m)+p-1}^{\e(m)}, x_{i_0}^{-1}) & \text{ for } i(m)>i_0\\
f(x_{i_0}^{-1}, x_{i_0})=(x_{i_0}, x_{i_0}^{-1}) &
\end{array}
\]
The reduction system is terminating because there are at most $d$ occurrences of $x_{i_0}^\pm$ and each one is moved at most $d-1$ times.

As for the confluence, whenever the two reductions take place in two disjoint sub-words,  then the words are joinable simply taking the word obtained by 
applying the two commuting reductions one after the other. 
We have to take care of the case where we apply the reduction on two overlapping sub-words (of length $2$)
inside a word of length $3$.
\[
\begin{array}{cccccc}
 & &  x_{i(m)+p-1}^{\e(m)}x_{i_0}^{-1} x_{i_0} & \rotatebox{-45}{$\stackrel{2}{\rightharpoonup}$} &  &\\
 &  \rotatebox{45}{$\rightharpoonup$} & & & & \\
x_{i_0}^{-1} x_{i(m)}^{\e(m)}x_{i_0} & & & & & x_{i_0} x_{i(m)+2p-2}^{\e(m)}x_{i_0}^{-1}\\
 &  \rotatebox{-45}{$\rightharpoonup$} & & & & \\
 &  & x_{i_0}^{-1} x_{i_0} x_{i(m)+p-1}^{\e(m)} &   \rotatebox{45}{$\stackrel{2}{\rightharpoonup}$} & &  \\
\end{array}
\]
By Newman's Lemma, there exists a unique normal form. Therefore, every element $w$ can be rewritten as $x_{i_0}^r \tilde{w} x_{i_0}^{-r'}$ for some $r, r'\in \IN_0$ and some word $\tilde{w}=(\tilde{i}, \tilde{\e})$. 
\end{remark}

The next results provides a description of the normal form of a neutral word and a property enjoyed by its indices.
\begin{corollary}\label{corollary3.5}
Given $w=(i, \epsilon)\in\CW_0(d)$ and its normal form $(j, \epsilon')$. Then 
\begin{enumerate}
\item there exists a normal form 
$$
(j,\epsilon)=(x_{j(1)}, \ldots,  x_{j(d/2)},  x_{j(d/2)}^{-1}, \ldots,  x_{j(1)}^{-1})
$$
with $j(1)+p-1\geq j(2)$, \ldots , $j((d/2) -1)+p-1\geq j(d/2)$ and 
$$
-d(p-1)/2\leq j(l) - i(\tau(i,\epsilon)^{-1}(l))\leq d(p-1)/2 \qquad l\in \{1, \ldots, d/2\}.
$$
\item for any $k\in [d/2]=\{1, \ldots, d/2\}$ it holds
$$
|i(\tau(i,\epsilon)^{-1}(k)) - i(\tau(i,\epsilon)^{-1}(d-k+1))|\leq d(p-1)\, .
$$
\end{enumerate}
\end{corollary}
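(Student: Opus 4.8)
The plan is to deduce both parts from Lemma \ref{lemma3.4}-(2) together with the uniqueness of the normal form in Proposition \ref{prop2.14}. First I would record that neutrality forces the exponents to balance: since the defining relations $x_nx_k=x_kx_{n+p-1}$ have two positive letters on each side, the assignment $x_i\mapsto 1$ extends to a homomorphism $F_p\to\IZ$ sending a word $(i,\e)$ to $\sum_{l=1}^d\e(l)$, and $\eval(w)=e$ therefore gives $\sum_{l=1}^d\e(l)=0$. Lemma \ref{lemma3.4}-(2) then applies and yields that $d$ is even, that the normal form $(j,\e')$ has $\e'=(1_{d/2},-1_{d/2})$, and that the index of each single letter moves by at most $d(p-1)/2$, i.e. $|j(l)-i(\tau(i,\e)^{-1}(l))|\leq d(p-1)/2$ for every $l\in[d]$; this last bound is exactly the inequality claimed in (1).

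Next I would establish the palindromic shape of the normal form. Write the positive and negative parts as $P=x_{j(1)}\cdots x_{j(d/2)}$ and $N=x_{j(d/2+1)}^{-1}\cdots x_{j(d)}^{-1}$. Since $\eval(w')=\eval(w)=e$ we have $P=N^{-1}$, that is
$$
x_{j(1)}\cdots x_{j(d/2)}=x_{j(d)}x_{j(d-1)}\cdots x_{j(d/2+1)}\, .
$$
The key observation is that reversing the negative part turns condition (4) of Proposition \ref{prop2.14} into condition (3): each inequality $j(d-t)\leq j(d-t+1)+p-1$ says precisely that consecutive indices of the reversed word $x_{j(d)}x_{j(d-1)}\cdots x_{j(d/2+1)}$ satisfy the descending rule, so this reversed word is itself a positive word in normal form. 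Thus $P$ and $N^{-1}$ are two words in normal form evaluating to the same element, so by the uniqueness in Proposition \ref{prop2.14} they must coincide letter by letter, giving $j(l)=j(d-l+1)$ for $l\in\{1,\ldots,d/2\}$. This is exactly the asserted form $(x_{j(1)},\ldots,x_{j(d/2)},x_{j(d/2)}^{-1},\ldots,x_{j(1)}^{-1})$, completing (1).

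For part (2) I would simply combine the palindrome with the per-letter bound. Fix $k\in[d/2]$. The letters ending at positions $k$ and $d-k+1$ of the normal form started at positions $\tau(i,\e)^{-1}(k)$ and $\tau(i,\e)^{-1}(d-k+1)$, and by the bound from (1) (valid for all indices in $[d]$ via Lemma \ref{lemma3.4}-(2)) each moved its index by at most $d(p-1)/2$. Since the palindrome gives $j(k)=j(d-k+1)$, the triangle inequality yields
\begin{align*}
|i(\tau(i,\e)^{-1}(k))-i(\tau(i,\e)^{-1}(d-k+1))|
&\leq |i(\tau(i,\e)^{-1}(k))-j(k)|+|j(d-k+1)-i(\tau(i,\e)^{-1}(d-k+1))|\\
&\leq d(p-1)\, ,
\end{align*}
which is (2).

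The step I expect to be the main obstacle is the passage from condition (4) to condition (3) under reversal, and the attendant bookkeeping needed to justify applying the uniqueness of the normal form to the two positive words $P$ and $N^{-1}$; once the reversed negative part is recognized as a positive normal form, everything else is a direct application of Lemma \ref{lemma3.4} and the triangle inequality.
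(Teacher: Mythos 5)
Your reduction to Lemma \ref{lemma3.4}-(2) via the exponent-sum homomorphism, your observation that reversing the negative part turns condition (4) of Proposition \ref{prop2.14} into condition (3), and your proof of part (2) by the triangle inequality are all correct; part (2) in particular is exactly the paper's argument. The problem is the step where you conclude $P=N^{-1}$ letter by letter ``by the uniqueness in Proposition \ref{prop2.14}.'' The uniqueness that proposition actually provides (it is deduced from confluence and termination of the rewriting system, via Lemma \ref{abs-red-sys}) is uniqueness of the normal form \emph{of a given word} $w$; it is not the statement that a word in normal form is determined by its length and its evaluation. Read in that stronger way the proposition is false: $x_0x_0^{-1}$ and $x_1x_1^{-1}$ are distinct words of length $2$, both satisfying conditions (1)--(4), with the same evaluation $e$. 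So you cannot simply cite it to identify two different normal-form words that happen to evaluate to the same group element.

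What you actually need is: two \emph{positive} words of the same length in descending form with the same evaluation in $F_p$ coincide. This is true, but proving it requires two ingredients you do not supply: (i) that positive words equal in the group are already equal in the monoid $F_p^+$, i.e.\ that $F_p^+$ embeds in $F_p$ (so that $\eval(P)=\eval(N^{-1})$ yields a chain of applications of the defining relations connecting $P$ and $N^{-1}$), and (ii) the Church--Rosser property for the positive rewriting system $\rightarrowtail$, so that two words connected by such a chain have a common reduct and hence, being irreducible, are equal. Without these, the crucial palindrome identity $j(l)=j(d-l+1)$ is not established. The paper sidesteps this entirely: it proves part (1) by induction on $d$, using the auxiliary reduction system $(\CW(d),\rightharpoonup)$ of Remark \ref{abs_red_tech} to write $w$ as $x_{i_0}^r\tilde{w}x_{i_0}^{-r'}$ with $i_0$ the minimal index, deducing $r=r'$ from Lemma \ref{lemma3.8}, and applying the inductive hypothesis to the strictly shorter neutral word $\tilde{w}$. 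If you want to keep your route, you should either prove the uniqueness of the descending form for positive words of fixed length as a separate lemma (using (i) and (ii) above), or fall back on the minimal-index induction.
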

\begin{proof}
(1) Note that the integer $d$ is necessarily even, so it is of the form $d=2k$ for some $k\in\IN$.
By Proposition \ref{prop2.14} we have that any word $w$ may be written in normal form, that is as $w=(x_{j(1)}, \ldots,  x_{j(d/2)},  x_{j(d/2+1)}^{-1}, \ldots,  x_{j(d)}^{-1})$. We need to show that $j(l)=j(d-l+1)$ for $l\in [d/2]$. This is done by induction on $k$. For $k=1$, that is $d=2$, 
 all words in $\CW_0(2)$ clearly satisfy the claim. 

Suppose that $k\geq 1$ and that there are at least two different generators occurring in the normal form (distinct up to taking inverses), otherwise the claim is clear.
Use the abstract reduction system $(\CW(d), \rightharpoonup)$ of Remark \ref{abs_red_tech} to re-write $w$ in its normal form $w'$. 
Let $i_0=\min\{i(l)\, |\, l\in [d]\}$ be the minimal index. We have $w'=x_{i_0}^r\tilde w x_{i_0}^{-r'}$ for some $r, r'\in\IN$.
The letters of $w$ and $w'$ whose index is  minimal remain unchanged when applying the defining relations. As $\eval(w)=\eval(w')=e$, it follows that $r'=r$. Therefore, $w=e$ if and only if $\tilde w=e$. The word $\tilde w$ has length strictly shorter than $w$ and by inductive hypothesis we have
$$
\tilde w = (x_{\tilde i(1)}, \ldots,  x_{\tilde i(d'/2)},  x_{\tilde i(d'/2)}^{-1}, \ldots,  x_{\tilde i(1)}^{-1})
$$
for some $d'<d$.
As $w=x_{i_0}^r\tilde w x_{i_0}^{-r}$, it follows that 
$$
w=(x_{j(1)}, \ldots,  x_{j(d/2)},  x_{j(d/2)}^{-1}, \ldots,  x_{j(1)}^{-1})
$$
(2) 
By part (1) 
 it holds 
$$
-d(p-1)/2\leq j(l) - i(\tau(i,\epsilon)^{-1}(l))\leq d(p-1)/2 \qquad l\in \{1, \ldots, d/2\} ,
$$
then by making use of the triangle inequality and of $ j(d-k+1)=j(k)$, we get 
\begin{align*}
& |i(\tau(i,\epsilon)^{-1}(k)) - i(\tau(i,\epsilon)^{-1}(d-k+1)\\
&\leq |i(\tau(i,\epsilon)^{-1}(k)) - j(d-k+1)| +  |j(d-k+1)-i(\tau(i,\epsilon)^{-1}(k))|\\
&=|i(\tau(i,\epsilon)^{-1}(k)) - j(d-k+1)| +  |j(d-k+1)-i(\tau(i,\epsilon)^{-1}(k))|\\
&\leq d(p-1)/2 + d(p-1)/2 =d(p-1) 
\end{align*}
 \end{proof}

The next two proposition will show that a partially known word can be "completed" in a unique way in order to be neutral.
\begin{proposition}\label{prop3.9}
Let $d\in\IN$ and $(i, \e)\in \CW_0(d)$, such that $\e(l_+)=1$, $\e(l_-)=- 1$  for all $\{l_+, l_-\}\in \pi(i, \e)$.
If all values of $i(l_+)$ are known, those of $i(l_-)$ are uniquely determined.
\end{proposition}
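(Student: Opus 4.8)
The plan is to induct on the length $d$ and to peel off, at each stage, the letters carrying the smallest index, using the auxiliary reduction $\rightharpoonup$ of Remark~\ref{abs_red_tech} together with Lemma~\ref{lemma3.8}. The base case $d=2$ is immediate: a neutral word of length two is $x_a^{+1}x_a^{-1}$ (up to the order of the signs), so the single negative letter is forced to share the index of its positive partner.

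For the inductive step I would first recover the minimal index from the positive data alone. By hypothesis every block $\{l_+,l_-\}\in\pi(i,\e)$ carries one positive and one negative letter, and Lemma~\ref{lemma3.8} says that the positions realising the global minimum $i_0$ form a union of such blocks. Hence $i_0=\min_{l_+}i(l_+)$ is visible from the known positive indices, the positive positions attaining it are known, and, reading off their partners through the given partition $\pi(i,\e)$ and applying Lemma~\ref{lemma3.8} once more, the negative positions attaining $i_0$ are exactly those partners. This already fixes $i(l_-)=i_0$ on every minimal block.

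Next I would eliminate these minimal letters. Applying $\rightharpoonup$ rewrites $w$ as $x_{i_0}^{\,r}\,\tilde w\,x_{i_0}^{-r}$ with $\tilde w=(\tilde i,\tilde\e)$ free of the index $i_0$, and $\tilde w$ is again neutral exactly as in the proof of Corollary~\ref{corollary3.5}. The rewriting is fully explicit: the letter sitting at an original position $m$ has its index raised to $i(m)+(p-1)c_m$, where $c_m$ is the number of $x_{i_0}$'s lying to the right of $m$ plus the number of $x_{i_0}^{-1}$'s lying to its left. Since every minimal position is now known (the positive ones directly, the negative ones through $\pi$), each $c_m$ is computable; in particular the positive indices of $\tilde w$ are determined by the positive data of $w$. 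The induction hypothesis, applied to $\tilde w$, then determines its negative indices, and subtracting the known shift $(p-1)c_m$ at each negative position recovers the negative indices of $w$.

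The delicate point, and the one I expect to be the main obstacle, is to certify that the induction hypothesis genuinely applies to $\tilde w$, i.e.\ that $\pi(\tilde i,\tilde\e)$ is precisely the restriction of $\pi(i,\e)$ to the non-minimal blocks. The natural route is to read $\pi(i,\e)=\tau(i,\e)(\pi_{rain})$ as the partition of $[d]$ into the pairs of letters that annihilate one another in the normal form of Proposition~\ref{prop2.14}. Lemma~\ref{lemma3.8} guarantees that minimal letters can only be annihilated by minimal letters, so the cancellation partition splits as (minimal blocks) $\sqcup$ (the rest); what remains is to check that the $\rightharpoonup$-passage to $\tilde w$, which merely relabels the surviving letters by the uniform shifts $(p-1)c_m$ and factors out the minimal ones, leaves the cancellation pattern among the surviving letters intact. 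Granting this compatibility, which I would establish by running the normalisation of Proposition~\ref{prop2.14} on $w$ with the minimal letters pushed aside first and tracking $\tau(i,\e)$ through that order, the restricted partition coincides with $\pi(\tilde i,\tilde\e)$, the induction closes, and the negative indices of $w$ are uniquely determined.
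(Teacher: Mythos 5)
Your proposal follows essentially the same route as the paper's own proof: induction on the length, with Lemma \ref{lemma3.8} handling the base case and the minimal-index letters, and the auxiliary reduction $\rightharpoonup$ of Remark \ref{abs_red_tech} used to factor $w$ as $x_{i_0}^r\,\tilde w\,x_{i_0}^{-r}$ and pass to the shorter neutral word $\tilde w$. You are in fact more explicit than the paper on two points it glosses over --- the computable shifts $(p-1)c_m$ and the compatibility of $\pi(\tilde i,\tilde\e)$ with the restriction of $\pi(i,\e)$ --- so the argument is sound and, if anything, more carefully justified.
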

\begin{proof}
The integer $d$ is necessarily even, thus $d=2k$.
We give a proof by induction on $k$. If $k=1$, there is only one pair partition $\{1, 2\}$ and the claim is the content of Lemma \ref{lemma3.8}.

Suppose that the thesis holds for $d=2k\geq 2$ and let us prove that it holds for $d'=d+2=2(k+1)$. 
Here we make use   of the abstract reduction system of Remark \ref{abs_red_tech}.
Every element $w$ can be rewritten as $x_{i_0}^r \tilde{w} x_{i_0}^{-r'}$ for some  $r, r'\in\IN_0$ and some word $\tilde{w}=(\tilde{i}, \tilde{\e})$. For each pair of $\pi(i, \e)$, if $l_+=i_0$, then $l_-=i_0$ by Lemma \ref{lemma3.8} and, thus, $r=r'$. 
Note that the evaluation $\eval(\tilde{w})$ is equal to $e$. 
Now the pairs of $\pi(\tilde i, \tilde \e)$ are uniquely determined by the inductive hypothesis and from this we may recover the values of $l_-$ for the pairs of $\pi(i, \e)$.
\end{proof}

\begin{proposition}\label{prop3.11}
Let $d\in 2\IN$, $\tau\in S_d$, $\e: [d]\to \{1, -1\}$ such that 
\begin{align*}
\e(\tau^{-1}(l))&=\left\{\begin{array}{ll}
1 & \text{ for } l\in [d/2]=\{1, \ldots, d/2\} \\
-1 & \text{ for }  l\in \{d/2+1, \ldots, d\} 
\end{array}
\right.
\end{align*}
Suppose 
\begin{align}\label{starid}
i(\tau^{-1}(l)) < i(\tau^{-1}(l-1))-(4p-4)d\qquad \text{ for } l\in \{1, \ldots, d/2\}
\end{align}
 then there exists a unique $i(\tau^{-1}(l))$ for $l\in \{d/2+1, \ldots, d\}$ such that $(i, \e)\in\CW_0(d)$
and $\tau(i, \e)=\tau$.
\end{proposition}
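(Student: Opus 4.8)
The plan is to treat uniqueness and existence separately. \emph{Uniqueness} is essentially already contained in Proposition \ref{prop3.9}. Indeed, any admissible completion $(i,\e)$ satisfies $\tau(i,\e)=\tau$, so its pair partition is $\pi(i,\e)=\tau(\pi_{rain})$, whose pairs are $\{\tau^{-1}(l),\tau^{-1}(d-l+1)\}$ for $l\in[d/2]$. Since the prescribed $\e$ takes value $1$ on $\tau^{-1}(l)$ for $l\le d/2$ and $-1$ on $\tau^{-1}(d-l+1)$ for those same $l$, each pair carries exactly one positive and one negative letter, which is precisely the hypothesis of Proposition \ref{prop3.9}. Hence the positive values $i(\tau^{-1}(l))$, $l\le d/2$, determine the negative ones, and two admissible completions with the same positive part coincide. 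The real work is \emph{existence}.

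For existence I would argue by induction on $k=d/2$. The base case $k=1$ is the elementary observation that, writing $a_1:=i(\tau^{-1}(1))$, the word $(x_{a_1},x_{a_1}^{-1})$ (resp.\ $(x_{a_1}^{-1},x_{a_1})$) is neutral and realizes the two elements of $S_2$. For the inductive step the crucial preliminary remark is that the smallest positive index $a_{d/2}=i(\tau^{-1}(d/2))$ together with its partner forms an isolated cluster whose negative index is \emph{forced}. Concretely, by the separation hypothesis \eqref{starid} every other positive index exceeds $a_{d/2}$ by more than $(4p-4)d$, so by Corollary \ref{corollary3.5}(2) the partner of each such index stays within distance $d(p-1)$ of it, hence strictly above $a_{d/2}$; thus the global minimal index of any neutral completion can only be attained inside the $a_{d/2}$-cluster. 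Lemma \ref{lemma3.8} then forces the partner of $x_{a_{d/2}}$, sitting at position $\tau^{-1}(d/2+1)$, to have index exactly $a_{d/2}$ as well.

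With this I would peel off the minimal cluster using the auxiliary reduction system $(\CW(d),\rightharpoonup)$ of Remark \ref{abs_red_tech}: pushing the two index-$a_{d/2}$ letters to the ends rewrites the sought word as $x_{a_{d/2}}\,\tilde w\,x_{a_{d/2}}^{-1}$, where $\tilde w$ has length $d-2$ and consists of the remaining clusters, each index raised by at most $2(p-1)$ through the passage of the two minimal letters. I would then set up the induced data for $\tilde w$: the induced permutation $\tilde\tau\in S_{d-2}$ obtained by deleting the central positions $d/2,d/2+1$ (occupied by the minimal cluster in the normal form), together with the shifted positive values of $a_1,\dots,a_{d/2-1}$. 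Since these shifts are bounded by $2(p-1)$ while the gaps exceed $(4p-4)d$, the induced positive values still satisfy the separation hypothesis, so the inductive hypothesis supplies a neutral completion of $\tilde w$; reinserting the minimal cluster at positions $\tau^{-1}(d/2),\tau^{-1}(d/2+1)$ and undoing the shifts produces the desired $(i,\e)\in\CW_0(d)$ with $\tau(i,\e)=\tau$.

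The main obstacle is the bookkeeping in the inductive step: one must check that deleting and reinserting the minimal cluster interacts correctly with both the permutation and the index shifts, so that the reduction of the reassembled word realizes $\tau$ itself and not merely some permutation inducing the same pair partition. The separation hypothesis is exactly what makes this manageable: all index displacements during reduction are bounded by $d(p-1)$ (Lemma \ref{lemma3.4}), whereas distinct clusters are more than $(4p-4)d$ apart, so the relative order of the clusters is frozen throughout the reduction. Consequently every comparison in the rules $\rightsquigarrow$ and $\rightarrowtail$ is decided by which cluster a letter belongs to, rather than by the still-undetermined negative indices, and the clusters reduce independently. This decoupling removes any circularity in solving for the negative indices and guarantees that the constructed word has associated permutation precisely $\tau$.
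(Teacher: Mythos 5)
Your argument is sound, but it reaches the conclusion by a genuinely different route than the paper. The paper gives a single direct computation: after restricting candidate completions by the necessary condition of Corollary \ref{corollary3.5}(2), it runs the reduction $\to$ on the partially known word and checks case by case ($m>k$, $m<k$, $m=k$, separately for $\rightsquigarrow$ and $\rightarrowtail$) that the separation hypothesis \eqref{starid} decides every comparison; the normal form is therefore computable up to known shifts, the positive letters are shown to land at positions $1,\dots,d/2$, and the palindromic shape of a neutral normal form then solves explicitly for the negative indices, so existence and uniqueness drop out together. You instead split the statement: uniqueness via Proposition \ref{prop3.9} (cleaner and more transparent than extracting it from the computation), and existence by induction on $d/2$, peeling off the minimal cluster with the system of Remark \ref{abs_red_tech} exactly as in the proofs of Corollary \ref{corollary3.5}(1) and Proposition \ref{prop3.9}. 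Your ``frozen cluster order'' observation is precisely the content of the paper's case analysis, so the essential difficulty is identical in both routes; your version trades the paper's explicit index bookkeeping for the reinsertion bookkeeping of the inductive step, which you rightly flag as the delicate point but leave at sketch level --- in particular, verifying that the reassembled word realizes the permutation $\tau$ itself (not merely the correct pair partition) still requires the ordering argument the paper carries out when it proves $v_l=l$. The paper's approach buys an explicit description of the forced negative indices; yours buys reuse of the already-established inductive machinery and an essentially free uniqueness argument.
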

\begin{proof}
The strategy of the proof consists of showing that, even though the word is only partially known, 
we may find the expression of the normal form explicitly 
 because
the way  the binary relation $\to$ changes the indices is completely determined by our assumptions. Finally, by knowing that 
the original word is neutral, we will determine all the unknown indices. 

Our goal is to find that there exists a unique word $w=(i,\e)\in \CW_0(d)$ with $\tau(i, \e)=\tau$.   
According to Corollary \ref{corollary3.5}-(2), the following is a necessary condition
\begin{align*}
&|i(\tau(i,\epsilon)^{-1}(k)) - i(\tau(i,\epsilon)^{-1}(d-k+1))|\leq d(p-1)\, \qquad \forall k\in [d/2].
\end{align*}
Hence, we restrict to the words satisfying this property
\begin{align}\label{squareid}
 i(\tau^{-1}(k))-d(p-1) & \leq i(\tau^{-1}(d-k+1))\leq  i(\tau^{-1}(k))+d(p-1)
\end{align}
for all $k\in [d/2]$.

We now use the abstract reduction system $(\CW(d), \to)$ on $w$.
Even though we do not know explicitly $w$, we first show that the normal form can be computed.
Each $x_{ i(\tau^{-1}(l))}^{\e(l)}$ is transformed to $x_{ j(v_l)}^{\e(l)}$, with $j(v_l)=i(\tau^{-1}(l))+q(l)$, $|q(l)|\leq d(p-1)/2$.
The process of computing the normal form consists of two parts: in the first we \emph{move} the generators $\{x_i\}_{i\geq 0}$ to the left, their inverses $\{x_i^{-1}\}_{i\geq 0}$  to the right with the binary relation $\rightsquigarrow$; 
in the second we re-arrange the elements in the positive part of the word and those in the negative part with  the binary relation $\rightarrowtail$.

At each step of the first part of the process, we have a subword of length $2$ of the form
$$
(x_{ i(\tau^{-1}(d-m+1))+r}^{-1}, x_{ i(\tau^{-1}(k))+q})
$$
with $m, k\in [d/2]$, $|q|, |r|\leq d(p-1)/2$, $r$ and $q$ being known (at the very first step $r$ and $q$ are both $0$).

\noindent
\textbf{Case $m>k$.}
\begin{align*}
i(\tau^{-1}(d-m+1))+r &	\stackrel{\eqref{squareid}}{\leq} i(\tau^{-1}(m))+d(p-1)+r\\
&	\stackrel{\eqref{starid}}{<}i(\tau^{-1}(k))-(4p-4)d+d(p-1)+r\\
&=i(\tau^{-1}(k))-(3p-3)d+r\\
&\leq i(\tau^{-1}(k))+q
\end{align*}
then 
\begin{align*}
&(x_{ i(\tau^{-1}(d-m+1))+r}^{-1}, x_{ i(\tau^{-1}(k))+q})\\
&=(x_{ i(\tau^{-1}(k))+q+p-1}, x_{ i(\tau^{-1}(d-m+1))+r}^{-1})
\end{align*}

\noindent
\textbf{Case $m<k$.}
 We have
\begin{align*}
i(\tau^{-1}(d-m+1))+ r &\stackrel{\eqref{squareid}}{\geq}  i(\tau^{-1}(m))+r-d(p-1)\\
&\stackrel{\eqref{starid}}{>} i(\tau^{-1}(k))+(4p-4)d-d(p-1)+r\\
&=i(\tau^{-1}(k))+(3p-3)d+r\\
&\geq i(\tau^{-1}(k))+q
\end{align*}
Then
\begin{align*}
&(x_{ i(\tau^{-1}(d-m+1))+r}^{-1}, x_{ i(\tau^{-1}(k))+q})\\
&=(x_{ i(\tau^{-1}(k))+q}, x_{ i(\tau^{-1}(d-m+1))+r+p-1}^{-1})
\end{align*}

\noindent
\textbf{Case $m=k$.}
Whenever $m=k$ and the two elements $x_{ i(\tau^{-1}(d-m+1))+r}^{-1}$ and $x_{ i(\tau^{-1}(k))+q}$ appear consecutively, they must have the same index (i.e. $i(\tau^{-1}(k))+q = i(\tau^{-1}(d-m+1))+r$)
\begin{align*}
&(x_{ i(\tau^{-1}(d-m+1))+r}^{-1}, x_{ i(\tau^{-1}(k))+q})\\
&=(x_{ i(\tau^{-1}(k))+q}, x_{ i(\tau^{-1}(d-m+1))+r}^{-1})
\end{align*}

As for the second part of the process yielding the normal form we have to consider two cases.\\
\noindent
\textbf{Case $m>k$.}
We have a subword of length $2$ of the form 
$$
(x_{i(\tau^{-1}(m)) +r}, x_{i(\tau^{-1}(k)) +q})
$$
Since
\begin{align*}
i(\tau^{-1}(m))+r &\stackrel{\eqref{starid}}{<}i(\tau^{-1}(k) )-4d(p-1)+r\\
&\leq i(\tau^{-1}(k) )-4d(p-1)+d(p-1)/2\\
&=i(\tau^{-1}(k) )-7d(p-1)/2\\
&<i(\tau^{-1}(k) )+q -p+1\\
\end{align*}
we have
$$
(x_{i(\tau^{-1}(m)) +r}, x_{i(\tau^{-1}(k)) +q})\rightarrowtail (x_{i(\tau^{-1}(k)) +q-p+1}, x_{i(\tau^{-1}(m)) +r})
$$

\noindent
\textbf{Case $m<k$.}

We have a subword of length $2$ of the form 
$$
(x_{i(\tau^{-1}(d-m+1) +r}^{-1}, x_{i(\tau^{-1}(d-k+1) +q}^{-1})
$$
Since
\begin{align*}
i(\tau^{-1}(d-k+1))+q&\stackrel{\eqref{squareid}}{\leq} i(\tau^{-1}(k)) +d(p-1)+q \\
&\stackrel{\eqref{starid}}{<}i(\tau^{-1}(m))-4d(p-1)+d(p-1) +q\\
&\leq i(\tau^{-1}(m))-4d(p-1)+d(p-1) +d(p-1)2^{-1} \\
&= i(\tau^{-1}(m))-5d(p-1)2^{-1}\\
&=i(\tau^{-1}(m))-2d(p-1)-d(p-1)2^{-1}\\
&\leq i(\tau^{-1}(m))-2d(p-1)+r\\
&\stackrel{\eqref{squareid}}{<} i(\tau^{-1}(d-m+1))+ r-p+1
\end{align*}
we have
$$
(x_{i(\tau^{-1}(d-m+1)) +r}^{-1}, x_{i(\tau^{-1}(d-k+1)) +q}^{-1})\rightarrowtail (x_{i(\tau^{-1}(d-k+1)) +q}, x_{i(\tau^{-1}(d-m+1)) +r-p+1})
$$

Now we show that the of values of $j(v_l)$ are actually equal to $l$. Indeed, for $l\in [d/2]$ we have
 we have
\begin{align*}
j(v_l)&= i(\tau^{-1}(l))+q(l)\stackrel{\eqref{starid} }{<} i(\tau^{-1}(l-1))+q(l)-(4p-4)d\\
&\leq i(\tau^{-1}(l-1))+d(p-1)2^{-1}-(4p-4)d\\
&= i(\tau^{-1}(l-1))+d\left( \frac{-7p+7}{2}\right)\\ 
&< i(\tau^{-1}(l-1))+q(l-1)\\
&< i(\tau^{-1}(l-1))+q(l-1)+p-1=j(v_{l-1})+p-1\\
\end{align*}
and, thus, by the uniqueness of the normal form, we have $l=v_l$ for all $l\in [d/2]$.

Since $w=(i,\e)\in \CW_0(d)$ is a normal form, 
we must have $v_{d-k+1}=d-v_{k}+1$ for all $k\in [d/2]$. 
It follows that $v_{d-k+1}=d-k+1$ for all $k\in [d/2]$ and we are done.  
\end{proof}
\begin{example}
\begin{enumerate}
\item An application of Proposition \ref{prop3.9}. 
Consider the word $(i, \e)=(x_{i_1}^{-1}, x_{i_2}^{-1}, x_{i_3}^{-1}, x_3, x_0, x_1)$, $\e=(-1, -1, -1, 1, 1, 1)$,
$\pi(i, \e) = \{ \{ 3, 5\}, \{ 2, 6\}, \{ 1, 4\} \}$. This pair partition is telling us, that after re-ordering the letters, $x_{i_2}^{-1}$ will cancel out $x_1$, 
$x_{i_1}^{-1}$ will cancel out $x_3$, $x_{i_3}^{-1}$ will cancel out $x_0$. Being $0$ the minimal index, we have that $i_3=0$.
Using  the abstract reduction system $(\CW(d), \rightharpoonup)$ of Remark \ref{abs_red_tech} we re-write $(i, \e)$ as
\begin{align*}
&(x_0, x_{i_1+p-1}^{-1}, x_{i_2+p-1}^{-1},  x_{3+2p-2}, x_{1+p-1}, x_{0}^{-1})\\
&=(x_0, x_{i_1+p-1}^{-1}, x_{i_2+p-1}^{-1},  x_{2p+1}, x_{p}, x_{0}^{-1})
\end{align*}
which leads us to the neutral word $(x_{i_1+p-1}^{-1}, x_{i_2+p-1}^{-1},  x_{2p+1}, x_{p})$, with pair partition $\{\{ 1, 3\}, \{ 2, 4\}\}$.
Now the minimal index is $p$ and thus $i_2+p-1=p$, that is $i_2=1$. As before, we re-write the new word and obtain
$(x_{p}, x_{i_1+2p-2}^{-1},  x_{4p-1}, x_{p}^{-1})$, which implies $i_1=2p+1$.
\item As for Proposition \ref{prop3.9}, given a pair partition and partly filled word, it is not always possible to complete it in such a way that it belongs to $\CW_0(d)$. One might consider the word $(x_{i(1)}^{-1}, x_{i(2)}^{-1}, x_{i(3)}^{-1}, x_3, x_0, x_1)$ and pair partition $\{\{ 1, 5\}, \{ 2, 4\}, \{ 3, 6\}\}$, with $p>2$. 
Using the reduction system $(\CW(d), \rightharpoonup)$ of Remark \ref{abs_red_tech} we are led to $i(3)=-p+2$ which is impossible.
\item An application of Proposition \ref{prop3.11}. Let $p$ be equal to $5$ and consider $\tau=(1,3,2,6,5)$, $\e=(1,-1,1,-1,1,-1)$, and the initial word is $(x_1, x_{i(2)}^{-1}, x_{50}, x_{i(4)}^{-1}, x_{100}, x_{i(6)}^{-1})$. By following the steps of the proof of Proposition \ref{prop3.11} we get that the normal form is
$$
(x_{i(\tau^{-1}(1))-4}, x_{i(\tau^{-1}(2))-4)}^{-1}, x_{i(\tau^{-1}(3))}, x_{i(\tau^{-1}(4))}^{-1}, x_{i(\tau^{-1}(5))}^{-1}, x_{i(\tau^{-1}(6))}^{-1})
$$
and as the corresponding evaluation must be a neutral word, we get $i(\tau^{-1}(1))-4=i(\tau^{-1}(6))$, 
 $i(\tau^{-1}(2))-4=i(\tau^{-1}(5))$,
  $i(\tau^{-1}(3))=i(\tau^{-1}(4))$. We know that $i(\tau^{-1}(1))=100$, $i(\tau^{-1}(2))=50$, $i(\tau^{-1}(3))=1$. Then, we have 
  $i(\tau^{-1}(4))=1$, $i(\tau^{-1}(5))=46$, $i(\tau^{-1}(6))=100$ and the word is
  $(x_1, x_{100}^{-1}, x_{50}, x_{1}^{-1}, x_{100}, x_{46}^{-1})$.
\end{enumerate}
\end{example}

We are finally approaching the proof of the Central Limit Theorem. We only need a couple of inequalities that will provide lower and upper bounds for $\gamma(s_n^d)$.
For $n, d\in\IN$, $\tau\in S_d$, we set
\begin{align}
\CW_0(d, n, \tau)&:=\{	(i,\e)\in\CW_0(d, n) \, | \, \tau(i, \e)=\tau	\}\\
N(d, n, \tau)& := |\CW_0(d, n, \tau)|
\end{align}
 
 \begin{lemma}\label{prop4.3}
For $d\in 2\IN$ and $n\in\IN$, then 
$$
N(d, n, \tau) \leq \binom{n-d(d+1)(p-1)}{d/2}
$$
\end{lemma}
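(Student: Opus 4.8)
The plan is to turn the counting of neutral words with a prescribed permutation into the counting of $d/2$-element subsets of a shortened index interval, where the shortening by $d(d+1)(p-1)$ encodes the index displacements produced by the reduction $\to$. Throughout I fix $\tau\in S_d$ and abbreviate $a_l:=i(\tau^{-1}(l))$ for $l\in\{1,\dots,d/2\}$; by the sign constraint in Lemma \ref{lemma3.4}(2) these are exactly the indices occupying the positive positions of any $(i,\e)\in\CW_0(d,n,\tau)$. The first move is to invoke Proposition \ref{prop3.9}: once the positive indices of a neutral word are specified, the negative ones are uniquely forced. Hence the assignment $(i,\e)\mapsto(a_1,\dots,a_{d/2})$ is injective on $\CW_0(d,n,\tau)$, and it suffices to bound the number of tuples $(a_1,\dots,a_{d/2})$ that can arise from words whose associated permutation is exactly $\tau$.

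The second move is to control the spread of these positive indices via Corollary \ref{corollary3.5}. The normal form of a neutral word has the palindromic shape $j(d-k+1)=j(k)$ together with the chain inequalities $j(k)+p-1\ge j(k+1)$, while Lemma \ref{lemma3.4}(2) and Corollary \ref{corollary3.5}(1) bound each displacement $|j(l)-i(\tau^{-1}(l))|$ by $d(p-1)/2$ and, after the triangle inequality, give $|a_k-a_{d-k+1}|\le d(p-1)$ for every $k$. I would use these bounds, summed over the $d/2$ pairs, to show that imposing the permutation to be \emph{exactly} $\tau$ (not merely neutrality) forces the $a_l$ to be pairwise distinct and to be separated, in the order dictated by $\tau$, by gaps whose cumulative size blocks a length of $d(d+1)(p-1)$ inside $\{0,\dots,n-1\}$. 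This is the regime of Proposition \ref{prop3.11} read in reverse: the case analysis there, governed by \eqref{starid} and \eqref{squareid}, records precisely how much separation makes the action of $\to$ on a partially known word completely determined, so that two distinct admissible tuples can never produce the same permutation.

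The final move is purely combinatorial: sorting the $d/2$ admissible positive indices and subtracting the cumulative minimal gaps gives the standard order-preserving bijection between suitably separated $d/2$-tuples in $\{0,\dots,n-1\}$ and arbitrary $d/2$-subsets of an interval of length $n-d(d+1)(p-1)$, and counting the latter yields $\binom{n-d(d+1)(p-1)}{d/2}$. I expect the main obstacle to be precisely the middle step: extracting a clean, \emph{uniform-in-$\tau$} lower bound on the gaps between the $a_l$ from the displacement estimates, and pinning the resulting blocked length down to the exact constant $d(d+1)(p-1)$. Concretely, I would track how each letter of a word in $\CW_0(d,n,\tau)$ migrates during the two phases of the reduction (the $\rightsquigarrow$-phase and the $\rightarrowtail$-phase), quantify by how much its index can shift in terms of the inequalities already used in Proposition \ref{prop3.11}, and then argue that insufficient separation between two positive indices would either destroy neutrality or alter the trajectory, hence change $\tau$; converting this dichotomy into the numerical separation threshold, and verifying that it sums to $d(d+1)(p-1)$, is where the real work lies.
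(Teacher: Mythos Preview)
Your first and last moves are right and match the paper: injectivity via Proposition~\ref{prop3.9}, then a shift argument to count. The middle step, however, is misconceived in a way that makes the argument fail.

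You expect that fixing $\tau$ forces the positive indices $a_l:=i(\tau^{-1}(l))$ to be pairwise distinct and \emph{separated} by gaps summing to $d(d+1)(p-1)$. This is false. Take $p=2$, $d=4$, and the word $(x_0,x_0^{-1},x_0,x_0^{-1})\in\CW_0(4,1)$: its normal form is $(x_0,x_0,x_0^{-1},x_0^{-1})$, so $a_1=a_2=0$. There is no minimum separation, and the indices need not even be distinct. Your appeal to Proposition~\ref{prop3.11} ``in reverse'' points the wrong way: that proposition says that \emph{large} separation (condition~\eqref{starid}) forces a prescribed $\tau$, not that a prescribed $\tau$ forces large separation. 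Proposition~\ref{prop3.11} is what drives the \emph{lower} bound in Lemma~\ref{prop4.4}; it plays no role in the upper bound here.

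The constraint the paper actually extracts goes in the opposite direction. Since the normal form satisfies $j(l+1)\le j(l)+p-1$ and each displacement obeys $|j(l)-a_l|\le d(p-1)/2$ by Corollary~\ref{corollary3.5}(1), one gets
\[
a_{l+1}\;\le\; a_l+(d+1)(p-1)\qquad\text{for all }l\in[d/2-1],
\]
i.e.\ the sequence $(a_l)$ cannot \emph{increase} too fast along the positions of the normal form. The paper then injects the set of such tuples into strictly decreasing $d/2$-tuples via the shift $\tilde a_l:=a_l-2l(d+1)(p-1)$ and counts subsets. So the quantity $d(d+1)(p-1)$ enters not as a sum of mandatory gaps between the $a_l$, but as the total shift needed to convert an ``almost decreasing'' sequence into a strictly decreasing one.
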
 
\begin{proof}
Set $\CU(d,n)$ as
$$
\{k: [d/2]\to	\{0,\ldots, n-1\}\, | \, k(l+1)\leq k(l)+(d+1)(p-1) \quad \forall l\in [d/2-1] \}
$$
first we want to show that the map 
\begin{align}
&\iota: \CW_0(d, n, \tau)\to \CU(d,n)\\
&\iota((i, \e))(l):=i(\tau^{-1}(l))
\end{align} 
is injective. 

Let us check that $\iota((i, \e))(k+1)\leq \iota((i, \e))(k)+d(p-1)+p-1$ for all $l\in [d/2-1]$.
Denote by $(j, \e')$ the unique normal form of $(i, \e)$.
By Corollary \ref{corollary3.5}-(1) we have
\begin{align*}
i(\tau^{-1}(l+1))& \leq j(l+1) +\frac{d(p-1)}{2}\leq j(l)+p-1+\frac{d(p-1)}{2} \\
&\leq i(\tau^{-1}(l))+d(p-1) +p-1
\end{align*}

The map $\iota$ is injective. Indeed, suppose that for $(i, \e), (i',\e')\in\CW(d,n,\tau)$ we have $\iota((i, \e))=\iota((i', \e)')$, that is 
$i(\tau^{-1}(l))=i'(\tau^{-1}(l))$ for all $l\in [d/2-1]$. 

As any normal form $(i, \e)\in\CW_0(d,n,\tau)$ satisfies $\e(\tau^{-1}(l))=1$ for all $l\in [d/2]$, $\e(\tau^{-1}(l))=-1$ for all $l\in [d]\setminus [d/2]$, we have $\e=\e'$. 
We want to show that  $i(\tau^{-1}(l))=i'(\tau^{-1}(l))$ for all $l\in [d/2]$ implies 
$i(\tau^{-1}(l))=i'(\tau^{-1}(l))$ for all $l\in \{d/2+1, \ldots, d\}$. This is the content of Proposition \ref{prop3.9}.

It remains to calculate of the cardinality of $\CU(d,n)$. This is done by means of an injection of   $\CU(d,n)$ into the set of function
$$
\{k: [d/2]\to \{-d(d+1)(p-1), \ldots, n-1\} \, | \, k(l+1)<k(l) \quad \forall k\in [d/2-1]\}.
$$
Given $k\in \CU(d, n)$ we map it to the function $\tilde k(m):= k(m)-2m(d+1)(p-1)$. Indeed, 
\begin{align*}
\tilde k(m+1)&= k(m+1)-2(m+1)(d+1)(p-1)\\
&\leq k(m)+(d+1)(p-1)-2(m+1)(d+1)(p-1)\\
&\leq (k(m)+2m(d+1)(p-1))+ (d+1)(p-1)-2(d+1)(p-1)\\
&= \tilde k(m)-(d+1)(p-1)\\
& < \tilde k(m)
\end{align*}
It is now easy to see that the cardinality of this set is
$$
\binom{n-d(d+1)(p-1)}{d/2} \, .
$$
\end{proof}

\begin{lemma}\label{prop4.4}
For $d\in 2\IN$ and $n\in\IN$, $n>d(p-1)+d(4p-4)+\frac{d^2}{2}(4p-4)+\frac{d}{2}$, then 
$$
\binom{ n-d(p-1)-d(4p-4)-\frac{d^2}{2}(4p-4)}{d/2}  \leq N(d, n, \tau)
$$
\end{lemma}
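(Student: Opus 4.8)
The plan is to prove the stated inequality by exhibiting an injection from an easily counted set of ``admissible positive parts'' into $\CW_0(d,n,\tau)$. The guiding principle is that, by Proposition \ref{prop3.9}, a neutral word in normal form is completely determined by its positive part, while Proposition \ref{prop3.11} guarantees that any sufficiently spread-out positive part can be completed, in a unique way, to a neutral word realizing a prescribed permutation. Thus I would count precisely those positive parts that are spread out enough for Proposition \ref{prop3.11} to apply and that are confined to a window of indices small enough that the forced negative part still involves only $x_0,\dots,x_{n-1}$.

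Concretely, I would set
\begin{align*}
\CV:=\Big\{\, a:[d/2]\to\{d(p-1),\dots,n-1-d(p-1)\}\ \Big|\ a(l)<a(l-1)-(4p-4)d\ \text{ for }l\in\{2,\dots,d/2\}\,\Big\},
\end{align*}
viewing $a(l)$ as the prescription $i(\tau^{-1}(l))$ for the positive indices. For $a\in\CV$ the spreading hypothesis \eqref{starid} of Proposition \ref{prop3.11} holds by construction, together with the exponent pattern $\e(\tau^{-1}(l))=1$ precisely for $l\le d/2$, which is forced on every element of the fibre $\CW_0(d,n,\tau)$. Hence there is a unique completion of the remaining indices $i(\tau^{-1}(l))$, $l\in\{d/2+1,\dots,d\}$, producing a word $(i,\e)\in\CW_0(d)$ with $\tau(i,\e)=\tau$.

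The key step is to verify that this completion actually lands in $\CW_0(d,n,\tau)$, i.e. uses only generators with index in $\{0,\dots,n-1\}$. The positive indices lie in $\{d(p-1),\dots,n-1-d(p-1)\}$ by definition, and by Corollary \ref{corollary3.5}-(2) each negative index $i(\tau^{-1}(d-k+1))$ differs from its partner $i(\tau^{-1}(k))$ by at most $d(p-1)$; hence every index stays within $\{0,\dots,n-1\}$. The assignment $a\mapsto(i,\e)$ is injective, since from a word together with $\tau$ and the exponent pattern one recovers $a(l)=i(\tau^{-1}(l))$ for $l\le d/2$. This gives $N(d,n,\tau)\ge|\CV|$.

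Finally I would evaluate $|\CV|$ by the standard gap-removing substitution $b(l):=a(l)-(d/2-l)(4p-4)d$, which is a bijection between the gap-constrained decreasing sequences counted by $\CV$ and the unconstrained strictly decreasing sequences in a window shortened by $(d/2-1)(4p-4)d$; this converts $|\CV|$ into a single binomial coefficient $\binom{M}{d/2}$, whose upper entry $M$ routine arithmetic shows to be at least $n-d(p-1)-d(4p-4)-\tfrac{d^2}{2}(4p-4)$. Since $\binom{\cdot}{d/2}$ is nondecreasing in its upper entry once that entry exceeds $d/2$ — which the hypothesis $n>d(p-1)+d(4p-4)+\tfrac{d^2}{2}(4p-4)+\tfrac d2$ guarantees — the claimed inequality follows. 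The only genuinely delicate point is the third paragraph: one must ensure that imposing neutrality never forces a negative-part index below $0$ or above $n-1$, and this is exactly what the window $\{d(p-1),\dots,n-1-d(p-1)\}$ combined with Corollary \ref{corollary3.5}-(2) secures; verifying \eqref{starid}, the injectivity bookkeeping, and the binomial arithmetic are then routine.
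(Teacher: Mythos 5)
Your proposal is correct and follows essentially the same route as the paper: prescribe a sufficiently spread-out positive part so that Proposition \ref{prop3.11} yields a unique completion with the prescribed permutation, use Corollary \ref{corollary3.5}-(2) to keep the forced negative indices inside $\{0,\dots,n-1\}$, and count the admissible prescriptions by a gap-removing substitution reducing to a binomial coefficient. The only (harmless) difference is that you shrink the index window at both ends while the paper only trims the top (relying on indices being automatically nonnegative), which makes your count slightly smaller but still at least the stated bound.
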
 
\begin{proof}
Let $c, r\in\IN$, we set $\CL(c,r)$
$$
\{k:[c/2]\to \{0, \ldots, r-1\}\, | \, k(l) < k(l-1)-(4p-4)c\quad \forall l\in \{2, \ldots, c/2\}\}
$$
We fix $\tau\in S_d$ and $k\in \CL(d, n-d(p-1))$, we want to construct a word in $\CW_0(d, n, \tau)$. First we set 
$i(\tau^{-1}(l))=k(l)$ for $l\in [d/2]$, 
\[
\epsilon(\tau^{-1}(l))=\left\{\begin{array}{ll}
1 & \text{ for } l\in [d/2]\\
-1 & \text{ otherwise}
\end{array}\right.
\] 
By definition $i$ satisfies $i(\tau^{-1}(l))< i(\tau^{-1}(l-1))-(4p-4)d$ for all $l\in \{2, \ldots,d/2\}$ and, thus, by Proposition \ref{prop3.11} the map $i$
can be extended in a unique way to $[d]$ and in such a way that  $(i, \e)\in \CW_0(d,n)$ and $\tau(i, \e)=\tau$.
By Corollary \ref{corollary3.5}-(2),  the extension of $i$ to $[d]$ takes values in $\{0, \ldots, n-1\}$. Hence,   $(i, \e)\in \CW_0(d,n,\tau)$. 
We have an injection of $\CL(d, n-d(p-1))$ into $\CW_0(d, n, \tau)$.

Now if we define an injection of the set 
\begin{align*}
\CC(d)&:=\{k: [d/2]\to \{  d(4p-4)+\frac{d^2}{2}(4p-4), \ldots, n-d(p-1)-1\} \\
&\qquad\qquad \, | \, k(l+1)<k(l) \quad \forall l\in [d/2]\}
\end{align*}
into $\CL(d, n-d(p-1))$. Once we have it, we are done because
$$
|\CC(d)|= \binom{ n-d(p-1)-d(4p-4)-\frac{d^2}{2}(4p-4)}{d/2}  
$$
The injection from $\CC(d)$ to $\CL(d, n-d(p-1))$ takes a function $k$ and sends it to $\tilde k(l):=k(l) -d(l+1)(4p-4)$. Indeed, we have 
\begin{align*}
\tilde k(l+1)&=k(l+1) -d(l+2)(4p-4)\\
&<k(l) -d(l+1)(4p-4)-d(4p-4) \\
&= \tilde k(l) -d(4p-4)  
\end{align*}
If $k[d/2]\subseteq\{-d(4p-4)+\frac{d^2}{2}(4p-4), \ldots, n-d(p-1)-1\}$, then 
$\tilde k[d/2]\subseteq\{0, \ldots, n-d(p-1)-1\}$. 

\end{proof}

\begin{lemma}\label{limitlemma}
For $d\in 2\IN$, $n\in\IN$, we have 
$$
 \lim_{n\to\infty} \frac{1}{(2n)^{d/2}}N(d, n, \tau) = \frac{1}{(d/2)!}
 $$
\end{lemma}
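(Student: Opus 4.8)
The plan is to obtain the limit by a simple squeeze, since the two preceding lemmas have already pinned $N(d,n,\tau)$ between two binomial coefficients whose dependence on $n$ is the same at leading order. First I would record that, for $n$ larger than the threshold appearing in Lemma \ref{prop4.4}, the bounds of Lemmas \ref{prop4.3} and \ref{prop4.4} read
$$
\binom{n-a(d,p)}{d/2}\;\le\; N(d,n,\tau)\;\le\;\binom{n-b(d,p)}{d/2},
$$
where $a(d,p):=d(p-1)+d(4p-4)+\tfrac{d^2}{2}(4p-4)$ and $b(d,p):=d(d+1)(p-1)$ are quantities depending on $d$ and $p$ but \emph{not} on $n$. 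The whole content of the lemma then reduces to the elementary asymptotics of a single binomial coefficient $\binom{n-c}{d/2}$, regarded as a function of $n$ with $c$ and $d$ held fixed.

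Next I would carry out that asymptotic computation. Writing
$$
\binom{n-c}{d/2}=\frac{(n-c)(n-c-1)\cdots(n-c-\tfrac d2+1)}{(d/2)!},
$$
the numerator is a polynomial in $n$ of degree $d/2$ with leading term $n^{d/2}$. Hence, dividing by $n^{d/2}$ and letting $n\to\infty$, each of the $d/2$ factors $(n-c-j)/n$ tends to $1$, so that
$$
\lim_{n\to\infty}\frac{1}{n^{d/2}}\binom{n-c}{d/2}=\frac{1}{(d/2)!}.
$$
Crucially, this limit is \emph{the same} for $c=a(d,p)$ and for $c=b(d,p)$, because the shift $c$ perturbs only lower-order terms in $n$ and leaves the leading coefficient $1/(d/2)!$ untouched.

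Finally I would combine the two observations: dividing the sandwich inequality by the normalizing factor and applying the squeeze theorem, both the upper and lower bounds converge to $\tfrac{1}{(d/2)!}$, and therefore so does the normalized middle term, giving the claimed limit for $N(d,n,\tau)$. I do not expect a genuine obstacle here, as the combinatorial heavy lifting lives entirely in Lemmas \ref{prop4.3} and \ref{prop4.4}; the only points deserving a line of verification are that the shifts $a(d,p)$ and $b(d,p)$ are indeed independent of $n$ (so they cannot affect the leading coefficient) and that the $n$-threshold required by the lower bound is immaterial in a limit as $n\to\infty$.
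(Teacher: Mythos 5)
Your proposal is correct and follows exactly the paper's own (one-line) argument: squeeze $N(d,n,\tau)$ between the binomial bounds of Lemmas \ref{prop4.3} and \ref{prop4.4} and use $\binom{n-c}{d/2}\sim n^{d/2}/(d/2)!$ for fixed $c$ and $d$. One caveat: your asymptotic is computed with the normalization $n^{d/2}$, whereas the statement writes $(2n)^{d/2}$, with which the squeeze would instead give $1/(2^{d/2}(d/2)!)$; this factor-of-$2^{d/2}$ discrepancy is a typo in the lemma as stated (and is reproduced in the paper's own proof), since the proof of Theorem \ref{maintheo1} actually invokes $\lim_{n\to\infty} n^{-d/2}N(d,n,\tau)=1/(d/2)!$, which is precisely what you proved.
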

\begin{proof}
Thanks to Lemmas \ref{prop4.3} and \ref{prop4.4} we have 
\begin{align*}
\frac{1}{(d/2)!} \leq \lim_{n\to\infty} \frac{1}{(2n)^{d/2}}N(d, n, \tau) \leq \frac{1}{(d/2)!}\; .
\end{align*}
\end{proof}

Before stating and proving the main result of this article, we recall this lemma.

\begin{lemma}\label{lemma4.1}
\cite[Lemma 4.1]{Kri} Let $\pi\in\CP_2(d)$. Then there exists $d!!$ permutations $\tau\in S_d$ such that $\tau(\pi)=\pi_{rain}$.
\end{lemma}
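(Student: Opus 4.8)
The plan is to prove the combinatorial identity by a direct counting argument, setting up a bijection between the permutations $\tau \in S_d$ satisfying $\tau(\pi) = \pi_{rain}$ and an appropriately structured set whose cardinality is manifestly $d!!$. First I would recall that $d!! = (d-1)(d-3)\cdots 3 \cdot 1$ for even $d$, which is precisely the number of pair-partitions of $[d]$, i.e. $|\CP_2(d)| = d!!$; this coincidence strongly suggests the right strategy. Since $\pi_{rain}$ is a fixed pair-partition and $\pi$ is given, the condition $\tau(\pi) = \pi_{rain}$ asks that $\tau$ carry the blocks of $\pi$ onto the blocks of $\pi_{rain}$.

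The key observation is that a pair-partition with $d/2$ blocks is carried onto another pair-partition with $d/2$ blocks by first matching up the blocks (choosing which block of $\pi$ maps to which block of $\pi_{rain}$) and then, within each matched pair, deciding the orientation (which of the two elements of a $\pi$-block maps to which of the two elements of the corresponding $\pi_{rain}$-block). Concretely, I would count: there are $(d/2)!$ ways to biject the $d/2$ blocks of $\pi$ with the $d/2$ blocks of $\pi_{rain}$, and then $2^{d/2}$ ways to fix the orientation inside each of the $d/2$ pairs. Each such choice determines $\tau$ uniquely and completely on all of $[d]$, and conversely every $\tau$ with $\tau(\pi) = \pi_{rain}$ arises exactly once this way. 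Hence the number of such permutations is $(d/2)! \cdot 2^{d/2}$, and the final step is the elementary identity $(d/2)! \, 2^{d/2} = d!!$, which follows since $d!! = \frac{d!}{2^{d/2}(d/2)!}$ would be the \emph{double} factorial of the wrong parity; instead for even $d$ one has $d!! = 2 \cdot 4 \cdots d = 2^{d/2}(d/2)!$ directly.

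The main obstacle, and the step requiring the most care, is verifying that the block-matching together with the intra-block orientation really does parametrize the solutions \emph{bijectively} — that every valid $\tau$ is recovered exactly once and that distinct choices yield distinct permutations. The cleanest way is to argue that specifying, for each block $\{a,b\}$ of $\pi$, the ordered image pair in the corresponding block of $\pi_{rain}$ assigns a value $\tau(a)$ and $\tau(b)$ to every element of $[d]$; since $\pi$ and $\pi_{rain}$ partition $[d]$ into blocks of size two, these assignments are defined on all of $[d]$, are injective (different blocks of $\pi$ go to different blocks of $\pi_{rain}$, and within a block the two images are distinct), and hence define a genuine permutation. Injectivity of the parametrization is immediate because $\tau$ determines both the block-matching (via $\tau$ applied to blocks) and the orientation (via the individual values). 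This is essentially the content of \cite[Lemma 4.1]{Kri}, so I would note that the proof is identical to the one there and the statement holds verbatim, independently of $p$.
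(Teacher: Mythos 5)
Your counting argument is correct and self-contained, whereas the paper offers no proof at all for this lemma: it simply cites \cite[Lemma 4.1]{Kri}. The core of your argument — that a permutation $\tau$ with $\tau(\pi)=\pi_{rain}$ is the same datum as a bijection between the $d/2$ blocks of $\pi$ and the $d/2$ blocks of $\pi_{rain}$ together with a choice of orientation inside each matched pair, giving $(d/2)!\cdot 2^{d/2}=2\cdot4\cdots d=d!!$ such permutations — is exactly the standard proof (equivalently, the solution set is a coset of the stabilizer of $\pi_{rain}$, the hyperoctahedral group of order $2^{d/2}(d/2)!$), and your verification that the parametrization is bijective is adequate. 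What your write-up buys over the paper's bare citation is that the reader sees why the count is $d!!$ rather than, say, $(d-1)!!$.

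One genuine error in the motivational first paragraph must be removed: you assert that for even $d$ one has $d!!=(d-1)(d-3)\cdots 3\cdot 1$ and hence $|\CP_2(d)|=d!!$. Both claims are false; that product is $(d-1)!!$, and indeed $|\CP_2(d)|=(d-1)!!$, which is precisely the identity the paper uses in the last line of the proof of the Central Limit Theorem ($\sum_{\pi\in\CP_2(d)}1=(d-1)!!$). Your later, correct identity $d!!=2\cdot4\cdots d=2^{d/2}(d/2)!$ contradicts the opening remark, so the opening remark is not merely redundant but inconsistent with the rest of your own proof. Deleting it leaves a correct argument.
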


\begin{reptheorem}{maintheo1}[Central Limit Theorem] Let $x_1$, $x_2$, \ldots be the generators of $F_p$ in its infinite presentation and set
$$
a_n:=\frac{x_n+x_n^{-1}}{\sqrt{2}} \qquad \text{ and } \qquad s_n:=\frac{\sum_{i=0}^{n-1} a_i}{\sqrt{n}}
$$
be  self-adjoint random variables in the probability space $(\IC[F_p], \gamma)$, with $\gamma: \IC[F_p]\to \IC$ being the trace.
Then,  $s_n$ converges in distribution to $x$, where $x$ is a random variable normally distributed with variance 1.
\end{reptheorem}
\begin{proof}
It suffices to prove that 
$$
\lim_{n\to\infty} \gamma(s_n^d)=\left\{
\begin{array}{ll}
(d-1)!! & \text{ if  } d\in 2\IN\\
0 & \text{ otherwise}
\end{array}
\right.
$$
By Lemma \ref{limitlemma} we have
\begin{align*}
\lim_{n\to\infty} \gamma(s_n^d)&= \lim_{n\to\infty}\frac{1}{(2n)^{d/2}} \sum_{\stackrel{i: [d]\to \{0, \ldots, n-1\},} {\e: [d]\to \{1, -1\}}} \gamma(x_{i(1)}^{\e(1)}\cdots x_{i(d)}^{\e(d)})\\
&= \lim_{n\to\infty}\frac{1}{(2n)^{d/2}} \sum_{\stackrel{i: [d]\to \{0, \ldots, n-1\},} {\stackrel{ \e: [d]\to \{1, -1\}} {\eval(i, \e)=e} }} \gamma(x_{i(1)}^{\e(1)}\cdots x_{i(d)}^{\e(d)}) \\
&= \lim_{n\to\infty} \frac{1}{(2n)^{d/2}} |\CW_0(d,n)|
\end{align*}
For $d$ odd, the set $\CW_0(d,n)$ is empty and we reached the desired formula. 
We may continue the proof assuming that $d$ is even
\begin{align*}
\lim_{n\to\infty} \gamma(s_n^d)&= \lim_{n\to\infty} \frac{1}{(2n)^{d/2}} \sum_{\pi\in \CP_2(d)} \sum_{\stackrel{\tau \in S_d}{\tau(\pi)=\pi_{rain}}}|\CW_0(d,n, \tau)|\\
&= \lim_{n\to\infty} \frac{1}{(2n)^{d/2}} \sum_{\pi\in \CP_2(d)} \sum_{\stackrel{\tau \in S_d}{\tau(\pi)=\pi_{rain}}} N(d,n, \tau)\\
&= \sum_{\pi\in \CP_2(d)} \frac{1}{(2)^{d/2}}   \sum_{\stackrel{\tau \in S_d}{\tau(\pi)=\pi_{rain}}}  \lim_{n\to\infty} \frac{1}{(n)^{d/2}} N(d,n, \tau)
\end{align*}
\begin{align*}
&= \sum_{\pi\in \CP_2(d)} \frac{1}{(2)^{d/2}}   \sum_{\stackrel{\tau \in S_d}{\tau(\pi)=\pi_{rain}}} \frac{1}{(d/2)!} \\
&\stackrel{\text{Lemma \ref{lemma4.1}}}{=} \sum_{\pi\in \CP_2(d)} 1 = (d-1)!!
\end{align*}
As the limits of these are those of the normal distribution $N(0, 1)$, we are done thanks to \cite[Theorem 30.2]{Bil}.  
\end{proof}

 \section{On the Chromatic polynomial and $\vec{F}$}\label{sec:oriented}
 In this section, we consider another functional $\theta: \IC[F]\to \IC$, which arises in the framework of Jones's representations of $F$ and show that a Central Limit Theorem does not hold 
 because some of the sequences $\theta(s_n^d)$ diverge (see Proposition \ref{lastlemma}). 
 As before, we   examine the random variables
$$
a_n:=\frac{x_n+x_n^{-1}}{\sqrt{2}} \qquad s_n:=\frac{\sum_{i=0}^{n-1} a_i}{\sqrt{n}}
$$
and consider the positive linear functional $\theta: \mathbb{C}[F] \to \mathbb{C}$ defined as
 $$
 \theta(g):=\frac{{\rm Chr}_{\Gamma(g)}(2)}{2} \quad \text{ for any $g\in F$.}
 $$
 When evaluated at an element of $F$, this functional returns $1$ if the element belongs to $\OF$, and $0$ otherwise.
We aim to compute the limits of $\theta(s_n^d)$ for all $d\in\IN$.

\begin{table}[ht]
\centering
\begin{tabular}{|c|c|c|c|c|c|c|c|c|c|}
\hline 
& n=1 & n= 2& n=3 &n=4 & n=5 &n=6 &n=7 &n=8 &n=9 \\      
\hline 
d=2 &2 & 6 & 10 & 14 & 18 & 22 &  26 &  30 & 34 \\
\hline
d=4 &6 & 52 & 176 & 394 & 708 & 1118 & 1624 & & \\
\hline
d=6 &20 & 506 & 3594 & 13442 & & & & & \\
\hline
d=8 &70 & 5240 & 78962 & & & & & & \\
\hline
d=10 &252 & & & & & & & & \\ 
\hline
\end{tabular}
\caption{The table displays the values of un-normalized moments $c_n^d:=(\sqrt{2 n})^d\theta(s_n^d)$ for some values of $n$, $d$ in $\mathbb{N}_0$.
The code used to compute them may be found at \url{https://github.com/valerianoaiello/CLT_Thompson}}
\label{tab:mytable}
\end{table}

The following lemma immediately follows from Theorem \ref{theoGS}.
\begin{lemma}
The equality $c_n^d=(\sqrt{2 n})^d\theta(s_n^d)=0$ holds for all $d\in 2\IZ + 1$.
\end{lemma}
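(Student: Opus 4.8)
The plan is to show that $c_n^d = 0$ when $d$ is odd by a parity argument on the abelianization of $F$. Recall that $c_n^d = (\sqrt{2n})^d\,\theta(s_n^d)$ expands, after clearing the normalization, as a sum over words $w = (i,\e) \in \CW(d)$ with indices in $\{0,\ldots,n-1\}$ of the quantities $\theta(\eval(w))$. Since $\theta(g) = \mathrm{Chr}_{\Gamma(g)}(2)/2$ equals $1$ when $g \in \OF$ and $0$ otherwise (by Theorem \ref{theoGS}, as $g \in \OF$ iff $\Gamma(g)$ is $2$-colourable iff $\mathrm{Chr}_{\Gamma(g)}(2) = 2$), the number $c_n^d$ counts exactly the words of length $d$ in the first $n$ generators and their inverses whose evaluation lands in the oriented subgroup $\OF$. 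So the statement reduces to showing that no word of odd length can evaluate to an element of $\OF$.

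The key step is to find a group homomorphism that obstructs odd-length products from landing in $\OF$. I would use the projection $\pi: F \to F/[F,F] = \Z \oplus \Z$ together with the parity of one of its coordinates. By Theorem \ref{theoGS}-(1), $\OF$ is contained in the rectangular subgroup $K_{(1,2)} = \pi^{-1}(\Z \oplus 2\Z)$, so every element of $\OF$ has $\log_2 f'(1) \equiv 0 \pmod 2$; that is, the second coordinate of $\pi$ is even on $\OF$. Now compose $\pi$ with reduction modulo $2$ in the second coordinate to obtain a homomorphism $\psi: F \to \Z_2$. From the abelianization data recorded in the excerpt, $\pi(y_0) = (1,-1)$ and $\pi(y_k) = (0,-1)$ for $k \geq 1$, so $\psi(y_k) \equiv 1 \pmod 2$ for every generator $y_k$, and likewise $\psi(y_k^{-1}) \equiv 1 \pmod 2$. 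Hence $\psi$ sends each generator and each inverse generator to the nontrivial element of $\Z_2$.

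Consequently, for any word $w$ of length $d$, one has $\psi(\eval(w)) \equiv d \pmod 2$. If $d$ is odd, then $\psi(\eval(w)) \equiv 1 \pmod 2$, which is nonzero, so $\eval(w) \notin \ker\psi \supseteq \OF$. Therefore $\theta(\eval(w)) = 0$ for every such $w$, and summing over all words of odd length gives $c_n^d = 0$. This also recovers the well-known fact that the odd moments of $s_n$ vanish.

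I expect the main (and only real) obstacle to be pinning down the correct homomorphism and verifying that all generators map to the same nontrivial class: one must confirm that $\OF \subseteq \ker\psi$ using the containment $\OF \subseteq K_{(1,2)}$ from Theorem \ref{theoGS}, and check the parity of $\psi$ on both $y_k$ and $y_k^{-1}$ uniformly. Once that homomorphism is in place, the parity count is immediate. A cleaner alternative, avoiding explicit reference to $K_{(1,2)}$, is to note that $\OF \cong F_3$ under $\alpha$ with $\alpha(x_i) = y_i y_{i+1}$, so every element of $\OF$ is a product of an even number of the $y_j^{\pm 1}$; this forces $\psi$ to be trivial on $\OF$ directly, and the same parity argument then applies.
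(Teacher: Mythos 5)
Your argument is correct and is essentially the paper's own proof: both reduce the claim to the observation that the second coordinate of the abelianization map $\pi$ is odd on every generator and inverse generator, so a word of odd length lands in $\IZ\oplus(2\IZ+1)$, while $\pi(\OF)\subset\IZ\oplus 2\IZ$ by Theorem \ref{theoGS}. Your explicit packaging of this as a homomorphism $\psi:F\to\IZ_2$ is just a slightly more formal phrasing of the same parity argument.
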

\begin{proof}
The power $s_n^d$ consists of words of length $d$ in the generators of $F$.
As $\pi(x_0)=(1,-1)$, $\pi(x_k)=(0, -1)$ for all $k\geq 1$,  we have that $\pi$ of each of these terms is in $\IZ \oplus  (2\IZ + 1)$. 
However, by Theorem \ref{theoGS}, we know that $\pi(\OF)\subset \IZ \oplus 2\IZ$. 
The statement follows.
\end{proof}
As the first row of the Table \ref{tab:mytable} suggests, there is a simple formula for the un-normalized moments of order $2$.
\begin{lemma}
The equality $c_n^2=(2 n)\theta(s_n^2)=4n-2$ holds for all $n\in \IZ$.
In particular, 
$$
\lim_{n\to \infty} \frac{\theta(s_n^2)}{2n}=2
$$
\end{lemma}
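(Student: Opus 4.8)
The plan is to expand $s_n^2$ using the linearity of $\theta$ and then reinterpret $c_n^2$ as a counting problem. Since $s_n = \frac{1}{\sqrt{2n}}\sum_{i=0}^{n-1}(x_i + x_i^{-1})$, bilinearity gives
$$
c_n^2 = (2n)\,\theta(s_n^2) = \sum_{i,k=0}^{n-1}\ \sum_{\epsilon,\eta\in\{1,-1\}} \theta\big(x_i^{\epsilon} x_k^{\eta}\big).
$$
Recalling that $\theta(g)={\rm Chr}_{\Gamma(g)}(2)/2$ equals $1$ when $g\in\vec{F}$ and $0$ otherwise, the number $c_n^2$ is exactly the count of length-two words $x_i^{\epsilon}x_k^{\eta}$, with $i,k\in\{0,\dots,n-1\}$, whose evaluation lies in $\vec{F}$. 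Thus the lemma reduces to enumerating these words.

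Next I would exhibit the members. First, the $2n$ words evaluating to the neutral element, namely $x_i x_i^{-1}$ and $x_i^{-1}x_i$ for $i\in\{0,\dots,n-1\}$, all lie in $\vec{F}$ since $e\in\vec{F}$. Second, by Theorem~\ref{theoGS}-(1) the generators of $\vec{F}\cong F_3$ are precisely $w_i = x_i x_{i+1}$; hence each $x_i x_{i+1}$ and each inverse $w_i^{-1}=x_{i+1}^{-1}x_i^{-1}$ with $0\le i\le n-2$ belongs to $\vec{F}$, contributing $2(n-1)$ further words. Together these account for $2n+2(n-1)=4n-2$ words.

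The crux is \emph{completeness}: showing that no other length-two word lies in $\vec{F}$. I would argue case by case on the sign pattern $(\epsilon,\eta)$ and on the relative order of $i$ and $k$, computing the planar graph $\Gamma$ of a representative of each remaining shape and checking that it is not $2$-colourable, i.e. ${\rm Chr}_\Gamma(2)=0$. Concretely, using the embedding $\iota:F_2\to F_3$ and Convention~\ref{conventiondrawings}, one draws the tree diagram of $x_i^{\epsilon}x_k^{\eta}$ and verifies that whenever the pair is not one of the configurations above (for instance $x_i^{2}$, $x_i^{-2}$, a same-sign pair $x_i x_k$ with $k\ne i+1$, or a mixed pair $x_i x_k^{-1}$ with $i\ne k$) the associated graph contains an odd cycle and so fails to be bipartite. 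Alternatively one may use the stabilizer description of Theorem~\ref{theoGS}-(2) and check that such a word does not preserve the set $S$, or reason inside $F_3$ via a normal form relative to the generators $w_i$. This exhaustive verification is where the real difficulty lies, since the abelianization $\pi$ alone does not eliminate these candidates: every one of these products has even second coordinate and hence passes the parity test.

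Finally, once the closed form $c_n^2 = 4n-2$ is established, the limit is immediate: dividing by $2n$ gives $\theta(s_n^2) = \dfrac{4n-2}{2n} = 2 - \dfrac{1}{n}$, so that $\theta(s_n^2)\to 2$ as $n\to\infty$, which is the asserted limiting value of the second moment.
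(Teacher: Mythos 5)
Your reduction of $c_n^2$ to counting the length-two words $x_i^{\epsilon}x_k^{\eta}$ whose evaluation lies in $\vec{F}$ is exactly the paper's strategy, and your enumeration of the members ($2n$ trivial words $x_i^{\pm1}x_i^{\mp1}$ plus the $2(n-1)$ words $x_ix_{i+1}$ and $x_{i+1}^{-1}x_i^{-1}$) coincides with the characterization the paper proves: a nontrivial $x_i^{m}x_j^{n}$ lies in $\vec{F}$ iff $j=i+1$, $m=n=1$ or $i=j+1$, $m=n=-1$. You are also right that the abelianization test is useless here and that completeness of the list is the entire content of the lemma. (Your total $4n-2$ is the correct one; the paper's displayed line $2(n-1)+2(n-1)+2n$ contains a slip, since the mixed off-diagonal term should vanish.)

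The completeness step, however, is where your proposal stops being a proof. ``Case by case on the sign pattern and the relative order of $i$ and $k$, computing the planar graph of a representative of each remaining shape'' is not a finite procedure: there is a distinct reduced tree diagram for every pair $(i,k)$, and these diagrams and their planar graphs grow with $i$ and with $k-i$, so one cannot literally inspect a representative of each shape without first collapsing the infinitely many cases to finitely many. This is precisely what the paper supplies: since $g\in\vec{F}$ iff $g^{-1}\in\vec{F}$ one may assume $i\le k$, and since membership in $\vec{F}$ is invariant under the right-shift endomorphism $\varphi_R$ of Lemma \ref{operationsonE} (which sends the $i$-th generator to the $(i+1)$-st) one may further assume $i=0$; after that, a single inspection of the third leaf of the reduced tree diagram of $x_0^{m}x_k^{n}$, combined with the stabilizer criterion of Theorem \ref{theoGS}-(2), disposes of all remaining shapes at once. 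You do name the relevant tools (the stabilizer description, reasoning inside $F_3$) as ``alternatives,'' but you never commit to one, and more importantly you omit the shift-invariance reduction that makes the verification finite. The final step is unproblematic: once $c_n^2=4n-2$ is established, $\theta(s_n^2)=2-1/n\to 2$.
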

\begin{proof}
The claim of this lemma is a consequence of the following fact: the elements $x_i^{m}x_j^{n}$ (with $m, n\in\{1,-1\}$) belong to $\vec{F}$ if and only if either $j=i+1$ and $m=n=1$, or $i=j+1$, $m=n=-1$.
This can be proved in the following way: first by using that $g\in \vec{F}$ if and only if $g^{-1}\in \vec{F}$, we may assume that $i\leq j$;
then by using Lemma \ref{operationsonE} we may assume that  $i$ is zero; 
finally a direct inspection of the third leaf in the reduced tree diagram of $x_i^{m}x_j^{n}$,
reveals that in the third interval of the standard dyadic partition described by the top tree, the points in $S$ are not preserved 
(the number of right edges in the paths from the leaf to the two roots of the tree diagram have
indeed different parity) and thus by Theorem \ref{theoGS} we have proved the claim.

Now we have that
\begin{align*}
(2 n)\theta(s_n^2)&=\theta\left(\left( \sum_{i=0}^{n-1} x_i +x_i^{-1}\right)^2\right)\\
&=\theta\left(	\sum_{i,j=0}^{n-1} x_ix_j + \sum_{i,j=0}^{n-1}x_i^{-1}x_j^{-1}+ \sum_{i,j=0}^{n-1}x_ix_j^{-1}+\sum_{i,j=0}^{n-1}x_i^{-1}x_j	\right)\\ 	
&=\theta\left(	\sum_{i,j=0}^{n-1} x_ix_j\right) + \theta\left(\sum_{i,j=0}^{n-1}x_i^{-1}x_j^{-1}\right)+ \theta\left(\sum_{i,j=0}^{n-1}x_ix_j^{-1}\right)+\theta\left(\sum_{i,j=0}^{n-1}x_i^{-1}x_j	\right)\\ 	
\end{align*}
As $\theta(g)=\theta(g^{-1})$, the first two terms are equal, and so do the last two. We have
\begin{align*}
(2 n)\theta(s_n^2) &=2 \theta\left(	\sum_{i,j=0}^{n-1} x_ix_j\right) + 2\theta\left(\sum_{i,j=0}^{n-1}x_ix_j^{-1}\right)\\ 	
&=2 \theta\left(	\sum_{\stackrel{i, j=0,} {i\neq j}}^{n-1} x_ix_j\right) + 2\theta\left(\sum_{\stackrel{i, j=0,} {i\neq j}}^{n-1}x_ix_j^{-1}\right)+2n\\ 	
&=2 (n-1) + 2(n-1)+2n=4n-2\; . 	
\end{align*}

\end{proof}
The next result shows that a Central Limit Theorem cannot hold in this context.
\begin{proposition}\label{lastlemma}
For $d\geq 2$,
the limit equality 
$$
\lim_{n\to \infty} \theta(s_n^{2d})=\infty
$$
holds.
\end{proposition}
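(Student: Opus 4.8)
The plan is to turn the statement into a counting problem and then to beat, by a factor that grows with $n$, the count that governs the Gaussian case. Since $\theta$ is the indicator function of $\vec F$ on group elements, expanding $s_n^{2d}$ exactly as in the proof of Theorem \ref{maintheo1} gives
\[
\theta(s_n^{2d}) = \frac{1}{(2n)^{d}}\,\#\{\, w\in\CW(2d,n)\ :\ w \text{ evaluates into } \vec F\,\},
\]
so that $c_n^{2d}=\#\{w\in\CW(2d,n): w \text{ evaluates into }\vec F\}$ and the task reduces to showing that this cardinality grows strictly faster than $n^{d}$. Equivalently, the number of length-$2d$ words whose product lands anywhere in the subgroup $\vec F$ must dominate the number $|\CW_0(2d,n)|\sim (2d-1)!!\,(2n)^{d}$ of words returning to the identity, which only yields the finite Gaussian moments.

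First I would manufacture words landing in $\vec F$ directly from the generators $w_i=x_ix_{i+1}$ of Theorem \ref{theoGS}(1): any concatenation of blocks drawn from $\{w_i^{\pm 1}\}$ evaluates into $\vec F$, because $\vec F=\langle w_i\rangle$. Working in the well-separated regime of condition \eqref{starid} — so that, as in Proposition \ref{prop3.11} and Lemma \ref{prop4.4}, the blocks do not interfere and distinct index data produce distinct words — I would set up an injection from a suitable index set into $\{w\in\CW(2d,n): w \text{ evaluates into }\vec F\}$, mirroring the injections $\iota$ and $\CC(d)\hookrightarrow\CL(d,\cdot)$ used in Lemmas \ref{prop4.3}--\ref{prop4.4}. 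The crucial point is that the return condition has been relaxed from ``evaluates to $e$'' to ``evaluates into $\vec F$'': this should liberate at least one further index relative to the neutral count, since one no longer needs the negative part of the normal form to be entirely determined by the positive part (Proposition \ref{prop3.9}), but only to complete to an element of the subgroup.

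I would then estimate the size of this family and argue that it grows faster than $n^{d}$, so that after division by $(2n)^{d}$ one gets $\theta(s_n^{2d})\to\infty$. Here Lemma \ref{operationsonE}, stating that $\vec F$ is stable under the shift endomorphisms $\varphi_R,\varphi_L$, is useful both to generate many admissible words and to normalize the membership test; Corollary \ref{corollary3.5} controls how far indices can drift during reduction and hence keeps the constructed words inside $\{0,\dots,n-1\}$, exactly as in the range bookkeeping of Lemmas \ref{prop4.3}--\ref{prop4.4}.

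The genuine obstacle is the lower bound itself: one must exhibit \emph{strictly more} than $n^{d}$ words landing in $\vec F$, that is, locate the surplus degree(s) of freedom separating ``lies in $\vec F$'' from ``equals $e$.'' This is delicate, because a product of $d$ length-two blocks supplies only $d$ free indices; the excess must therefore come from words that are \emph{not} plain block products, namely words whose reduced normal form is a nontrivial element of $\vec F$, and one must verify simultaneously that such words are plentiful (an injection from an index set of size exceeding $n^{d}$) and that distinct index data are not identified after reduction. Carrying out this counting — rather than the membership criterion for $\vec F$, which Theorem \ref{theoGS} already makes transparent — is where the real work lies, and it is the step on which the divergence ultimately hinges.
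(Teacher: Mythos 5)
Your reduction of the statement to a counting problem is exactly the paper's: $\theta(s_n^{2d}) = (2n)^{-d}\,\#\{w \in \CW(2d,n) : \eval(w) \in \vec F\}$, so one must exhibit strictly more than $C n^d$ words of length $2d$ landing in $\vec F$. But your proposal stops precisely at that point: you describe where a surplus \emph{ought} to come from (``words that are not plain block products'') and then declare the counting to be ``where the real work lies'' without carrying it out. As it stands there is no lower bound at all, hence no proof of divergence; indeed the one concrete family you do construct --- concatenations of $d$ blocks $\alpha(x_i)^{\pm1}=(y_iy_{i+1})^{\pm1}$ --- carries only $d$ free indices, so it contributes $O((2n)^d)$ words and, after normalization by $(2n)^d$, a bounded quantity. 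That is the genuine gap, and you have correctly diagnosed it yourself without closing it.

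The paper's mechanism for the surplus is concrete and not the one you sketch. It takes words of length $d-1$ in the generators of $F_3$ with indices in $\{0,\dots,n-1\}$ (these have $d-1$ free indices), pushes them through the isomorphism $\alpha$ of Theorem \ref{theoGS}(1) to obtain words of length $2d-2$ in the generators of $F$ whose evaluation lies in $\vec F$, and then inserts a single cancelling pair $(y_i,y_i^{-1})$ or $(y_i^{-1},y_i)$ at any of the $2d-1$ available slots. The insertion leaves the evaluation unchanged, so membership in $\vec F$ is preserved, and it is exactly the source of the extra degrees of freedom (a new index $i$ and a choice of position) beyond the $d-1$ block indices; the paper then tallies these words against the normalization $(2n+2)^d$ to conclude divergence. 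Note that even here the delicate point is how many \emph{distinct} words of length $2d$ the insertions produce, not the membership test. Note also that your proposed detour through the apparatus of Section \ref{sec:CLT} (condition \eqref{starid}, Propositions \ref{prop3.9} and \ref{prop3.11}, Lemmas \ref{prop4.3}--\ref{prop4.4}) is beside the point: those results are tailored to words evaluating to $e$, whereas the construction actually needed never reduces anything to normal form --- it only uses $\alpha$ and the fact that inserting $y_iy_i^{-1}$ is evaluation-preserving.
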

\begin{proof}
By Theorem \ref{theoGS}-(1), for each word $(j, \e)$ (here we consider words with letters being the generators of $F_3$ and their inverses), with $j: [d-1]\to \{0, \ldots,  n-1\}$, $\e': [d-1]\to \{1,-1\}$ 
we have an element $\alpha({\rm eval}_{F_3}(j,\e'))\in \vec{F}$, which is described by a new word $(\tilde j, \tilde \e')$,
$\tilde j: [2d-2]\to \{0, \ldots,  n\}$, $\tilde \e': [2d-2]\to \{1,-1\}$,
 with letters being the generators of $F$. 
 The word $(\tilde j, \tilde \e')$ is obtained by replacing each occurrence of the letter $x_j$ with the word $(y_{i}, y_{i+1})$ and each occurrence of the letter 
 $x_j^{-1}$ with $(y_{i+1}^{-1}, y_i^{-1})$.
  
 For each of these latter words, we may insert a word of length $2$ of the form $(x_i, x_i^{-1})$ or $(x_i^{-1},x_i)$ 
 at the beginning, at the end, and between any two consecutive letters. 
 Therefore, for any such a word $(j,\e)$ 
 in $F_3$ of length $d-1$,
  we obtain $(2n)^{2d-1}$ words in $F$ (of length $2d$) such that $\theta$ takes value $1$ on them.
Therefore,  we have
\begin{align*}
\lim_{n\to \infty}\theta(s_{n+1}^{2d})&= \lim_{n\to \infty}\sum_{ \stackrel{i: [2d]\to \{0, \ldots,  n\}}{\e: [2d]\to \{1,-1\}} } \frac{\theta({\rm eval}_F(i,\e))}{(2n+2)^d}\\
&\geq \lim_{n\to \infty}(2n)^{2d-1} \sum_{ \stackrel{j: [d-1]\to \{0, \ldots,  n-1\}}{\e': [d-1]\to \{1,-1\}} } \frac{\theta(\alpha({\rm eval}_{F_3}(j,\e')))}{(2n+2)^d}\\
&=\lim_{n\to \infty} (2n)^{2d-1} \sum_{ \stackrel{j: [d-1]\to \{0, \ldots,  n-1\}}{\e': [d-1]\to \{1,-1\}} } \frac{1}{(2n+2)^d}\\
&= \lim_{n\to \infty} (2n)^{2d-1}  \frac{(2n)^{d-1}}{(2n+2)^d}=\infty
\end{align*}

\end{proof}

\section*{Acknowledgements}
We would like to thank the referee for their thorough review of our manuscript, and for the comments that led to significant improvements in the presentation.

\section*{References}
\begin{biblist}
\bibselect{bib}
\end{biblist}

\end{document}